\documentclass[11pt, a4paper,leqno]{amsart}
\usepackage{amsmath,amsthm,amscd,amssymb,amsfonts,amsbsy,mathrsfs}
\usepackage{latexsym}
\usepackage{txfonts}
\usepackage{exscale}
\usepackage{pdfsync}

\usepackage{enumerate}
\usepackage{amssymb,esint,mathrsfs}
\usepackage{bm}

\usepackage{lipsum}

\usepackage{bbm}
\day=06 \month=08 \year=2014

\usepackage[colorlinks,allcolors=RoyalBlue]{hyperref}
\usepackage{pdfsync}
\usepackage[usenames, dvipsnames]{color}
\usepackage[usenames, dvipsnames]{xcolor}

%esto te baja en los \chi_{E} el E
\newcommand{\strt}[1]{\rule{0pt}{#1}}
\newcommand{\car}[1]{\chi_{\strt{1.5ex}#1}}

\parskip=3pt

\makeatletter

\newcommand{\Rmnum}[1]{\expandafter\@slowromancap\romannumeral #1@}
\makeatother

\calclayout
\allowdisplaybreaks

%%%  Environment headers

\theoremstyle{plain}
\newtheorem{theorem}{Theorem}
\newtheorem{lemma}[theorem]{Lemma}

\newtheorem{proposition}[theorem]{Proposition}
\newtheorem{conjecture}[theorem]{Conjecture}

\theoremstyle{definition}
\newtheorem{definition}[equation]{Definition}

\newtheorem{example}[equation]{Example}
\newtheorem*{definition*}{Definition}

\theoremstyle{remark}
\newtheorem{remark}[theorem]{Remark}

\numberwithin{equation}{section}
\numberwithin{theorem}{section}

\newcommand{\R}{\mathbb{R}}

\newcommand{\Z}{\mathbb{Z}}

\newcommand{\eps}{\varepsilon}

\newcommand{\la}{\lambda}

\newcommand{\FI}{\varphi}

\newcommand{\norm}[1]{\left\lVert #1\right\rVert}

\DeclareMathOperator{\dist}{dist}
\DeclareMathOperator{\diam}{diam}

\def\Xint#1{\mathchoice
  {\XXint\displaystyle\textstyle{#1}}%
  {\XXint\textstyle\scriptstyle{#1}}%
  {\XXint\scriptstyle\scriptscriptstyle{#1}}%
  {\XXint\scriptscriptstyle\scriptscriptstyle{#1}}%
  \!\int}
\def\XXint#1#2#3{{\setbox0=\hbox{$#1{#2#3}{\int}$}
    \vcenter{\hbox{$#2#3$}}\kern-.5\wd0}}

\def\avgint{\Xint-}
\newcommand{\vertiii}[1]{{\left\vert\kern-0.25ex\left\vert\kern-0.25ex\left\vert #1 
    \right\vert\kern-0.25ex\right\vert\kern-0.25ex\right\vert}}

%

%%%%%%%%%%%%%%%%%%%%%%%%%%%%%%%%%%%%%%%%%%%%%%%%%%%%%%%%%%%%%%%%%%%%

\author{Javier Canto}
\address{BCAM -- Basque Center for Applied Mathematics, 48009 Bilbao, Spain} 
\email{jcanto@bcamath.org}

\thanks{This research is supported by the Spanish Ministry of Economy and Competitiveness, MTM2017-
82160-C2-2-P and SEV-2017-0718 and by the Basque Government BERC 2018-2021 and IT1247-19. The author is also supported by the Basque Government through the predoctoral grant ``Ayuda para la formaci\'on de personal investigador no doctor".}

\keywords{Weighted inequalities, $A_\infty$, $C_p$, Calder\'on--Zygmund operator, Hardy--Littlewood maximal operator}

\subjclass[2020]{Primary: 42B20; Secondary: 42B25}

\allowdisplaybreaks

\begin{document}

\title[Sharp RHI for $C_p$ weights and Applications]{Sharp reverse H\"older inequality for $C_p$ weights and applications}

\begin{abstract}

We prove an appropriate sharp quantitative reverse H\"older inequality for the $C_p$ class of weights from which we obtain as a limiting case the sharp reverse H\"older inequality for the $A_\infty$ class of weights \cite{HP1,HPR1}. We use this result to provide a quantitative weighted norm inequality between Calder\'on--Zygmund operators and the Hardy--Littlewood maximal function, precisely
\[
\norm{Tf}_{L^p(w)} \lesssim_{T,n,p,q} [w]_{C_q}(1+\log^+[w]_{C_q})\norm{Mf}_{L^p(w)},
\] 
for $w\in C_q$ and $q>p>1$, quantifying Sawyer's theorem \cite{Sawyer1}.
\end{abstract}

\maketitle
\setcounter{tocdepth}{1}
%\tableofcontents
\setcounter{tocdepth}{2}
%\date{\today}

%\tableofcontents
\section{Introduction and main results}

One of the many forms of the classical H\"older inequality is the following one. For any non-negative function $f$ and $\delta>0$, we have
\begin{equation}\label{eq:HI-classical}
\frac{1}{|Q|}\int_Q f(x)dx \leq \left( \frac{1}{|Q|}\int_Q f(x)^{1+\delta}dx\right)^\frac{1}{1+\delta},
\end{equation}
where $Q\subset \R^n$ is a cube and $|\cdot|$ denotes the Lebesgue measure. Reverse H\"older inequalities (RHI) are the same as \eqref{eq:HI-classical} but with the inequality in the opposite direction. More precisely,
\begin{equation}\label{eq:RHI-class}
 \left( \frac{1}{|Q|}\int_Q f(x)^{r}dx\right)^\frac{1}{r} \leq \frac{C}{|Q|}\int_Q f(x)dx,
\end{equation}
for some $r>1$. There has to be some constant $C\geq 1$, since otherwise it would be an equality. Weights satisfying \eqref{eq:RHI-class} with uniform $C$ for all cubes belong to the class $RH_r$. 

The characterization of weights satisfying a RHI is a classical result: a weight satisfies a RHI if and only if it is contained in the class $A_\infty$. In other words,
\[ A_\infty = \bigcup_{r>1} RH_r.\]

A sharp quantitative RHI for $A_\infty$ was given by Hyt\"onen, P\'erez and Rela \cite{HP1,HPR1}, see Section 2 for details. This quantification has two main properties: the constant on the right hand side is uniform for all weights and the dependence on the $A_\infty$ constant of the exponent is sharp. These sharp RHI inequalities have been used in different applications: for obtaining quantitative estimates for norms of some singular integral operators  \cite{CACDPO,HRT,LPRR}, or for obtaining sharp estimates for solutions of certain PDE \cite{JLX}, among many others.

The main aim of this article is to, mimicking the $A_\infty$ RHI of Hyt\"onen--P\'erez--Rela, give a sharp RHI in the context the $C_p$ class of weights. This class was introduced by Muckenhoupt in \cite{Muckenhoupt1981} and it is intrinsically related to the Coifman--Fefferman inequality (CFI). In our context, CFI is a weighted norm inequality between a Calder\'on--Zygmund operator and the Hardy--Littlewood maximal function. More precisely,
\begin{equation}\label{eq:CF-p}\tag{CFI$-p$}
\int_{\R^n} (T^*f(x))^pw(x)dx \leq c\int_{\R^n} (Mf(x))^pw(x)dx.
\end{equation}
See Section 5 for the precise definitions and for an exposition on the inequality. Recently, inequality \eqref{eq:CF-p} has been shown to hold for a wider variety of operators \cite{CLRT,CLPR}.

This inequality was first proved by Coifman and Fefferman in \cite{CoifmanFeffermanpaper} for $A_\infty$ weights, but Muckenhoupt showed in \cite{Muckenhoupt1981} that $A_\infty$ is not a necessary condition for \eqref{eq:CF-p}. In that article, he gave a necessary condition which he named the $C_p$ condition. Note how the class depends on the exponent $p$. Later on, Sawyer \cite{Sawyer1} proved that $w\in C_{p+\eta}$ for some $\eta>0$ is a sufficient condition in the range $p\in(1,\infty)$. It is still an open conjecture if $C_p$ is a sufficient condition.

Since the $C_p$ class is strictly bigger than $A_\infty$, one cannot expect a true RHI for these weights. Nevertheless, there is a weaker RHI for these weights. Indeed, for $1<p<\infty$, a weight $w$ belongs to $C_p$ if and only if there exist $\delta,C>0$ such that
\begin{equation*}%\label{eq:RHI-Cp1}
\left( \avgint_Q w(x)^{1+\delta}dx\right)^\frac1{1+\delta} \leq \frac{C}{|Q|}\int_{\R^n} \big( M\car Q\big(x))^p w(x)dx
\end{equation*}
for every cube $Q$, where $M\car Q$ denotes the Hardy--Littlewood maximal function of the characteristic function of the cube $Q$. Since $M\car Q\geq \car Q$ a.e., this is  weaker than \eqref{eq:RHI-class}. Abusing slightly the language, we shall also call this weaker reverse H\"older inequality a reverse H\"older inequality.

As stated before, the aim of this article is to give a quantitative RHI for $C_p$ weights, with a sharp dependence of the exponent on the weight. In order to do that we define the $C_p$ constant of a weight $w$ as 
\[
[w]_{C_p} := \sup_Q \frac{1}{\int_{\R^n} (M\car Q)^p w} \int_Q M(w \ \car Q).
\]
See Section 2 for the motivation behind this definition. 

\begin{theorem}[Sharp quantitative RHI for $C_p$ weights]
Let $1<p<\infty$ and let $w$ be a weight such that $0<[w]_{C_p}<\infty.$
Then $w\in C_p$ and $w$ satisfies, for $\delta = \frac{1}{ B_{n,p} \max([w]_{C_p},1)}$, 
\begin{equation*}
\left(\avgint_Qw^{1+\delta}\right)^\frac1{1+\delta} \leq  \frac{4}{|Q|}\int_{\R^n} (M\chi_{\strt{1.5ex}Q})^pw.
\end{equation*} 
\end{theorem}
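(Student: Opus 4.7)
I would follow the framework of the Hyt\"onen--P\'erez--Rela proof of the sharp $A_\infty$ reverse H\"older inequality, with $\Phi_Q := \int_{\R^n}(M\car{Q})^p w$ taking the role that the weight mass $w(Q)$ plays in the $A_\infty$ setting. The defining inequality $\int_Q M(w\car{Q}) \leq [w]_{C_p}\Phi_Q$ is the main analytic input, and the trivial observation $(M\car{Q})^p \geq \car{Q}$ gives $\Phi_Q \geq w(Q)$, i.e. $\avgint_Q w \leq W := \Phi_Q/|Q|$, which is the quantity on the right-hand side of the target inequality (up to the factor $4$).

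Starting from the layer-cake formula
\begin{equation*}
\int_Q w^{1+\delta} = \delta \int_0^\infty \la^{\delta-1}w(E_\la)\,d\la,\qquad E_\la=\{x\in Q:w(x)>\la\},
\end{equation*}
I would split at the average $w_Q$. For $\la\leq w_Q$ the trivial bound $w(E_\la)\leq w(Q)\leq W|Q|$ yields $\delta\int_0^{w_Q}\la^{\delta-1}w(E_\la)\,d\la \leq w_Q^\delta w(Q) \leq W^{1+\delta}|Q|$, which is already at the right scale. For $\la>w_Q$ I would run a Calder\'on--Zygmund decomposition of $w$ on $Q$, producing disjoint stopping cubes $\{Q_j(\la)\}$ with $\la<\avgint_{Q_j}w\leq 2^n\la$ and $w\leq\la$ a.e.\ off $\Omega_\la=\bigcup_j Q_j(\la)$. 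Then $w(E_\la)\leq w(\Omega_\la)\leq 2^n\la|\Omega_\la|$, and the containment $\Omega_\la\subset\{M(w\car{Q})>\la\}\cap Q$ combined with Chebyshev and the $C_p$ condition gives
\begin{equation*}
|\Omega_\la|\leq \frac{1}{\la}\int_Q M(w\car{Q}) \leq \frac{[w]_{C_p}\Phi_Q}{\la}.
\end{equation*}

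The tail integral $\delta\int_{w_Q}^\infty\la^{\delta-1}w(\Omega_\la)\,d\la$ is the heart of the proof. A naive application of the bounds above yields the non-decaying estimate $w(\Omega_\la)\leq 2^n[w]_{C_p}\Phi_Q$, which makes the integral diverge. To remedy this I would iterate CZ plus the $C_p$ inequality inside each stopping cube $Q_j(\la_{m-1})$ at the next level $\la_m=(2^n)^m w_Q$, producing a telescoping estimate whose geometric sum closes precisely when $\delta$ is of order $1/[w]_{C_p}$. Choosing $\delta\leq 1/(B_{n,p}\max([w]_{C_p},1))$ with $B_{n,p}$ large enough would then bound the tail by $3W^{1+\delta}|Q|$, and adding the low-$\la$ piece gives $\int_Q w^{1+\delta}\leq 4^{1+\delta}W^{1+\delta}|Q|$, equivalent to the stated bound after dividing by $|Q|$ and taking the $(1+\delta)$-th root.

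The main obstacle is the iteration in the preceding paragraph. In the $A_\infty$ case the recursion closes cleanly because $\sum_j w(Q_j(\la)) = w(\Omega_\la)\leq w(Q)$ is automatic from the disjointness of the stopping cubes. In the $C_p$ setting the analogue requires controlling $\sum_j \Phi_{Q_j}=\int\bigl(\sum_j(M\car{Q_j})^p\bigr)w$ in terms of $\Phi_Q$, and the tails of the maximal functions $M\car{Q_j}$ outside their respective cubes can overlap significantly, so a pointwise comparison $\sum_j(M\car{Q_j})^p\leq C(M\car{Q})^p$ fails in general. The technical contribution must therefore be a stopping-time or integrated comparison---presumably exploiting the $p$-power structure of $(M\car{Q})^p$ and the dyadic summability it grants---that controls $\sum_j\Phi_{Q_j}$ on average by $\Phi_Q$ with a constant depending only on $n$ and $p$, so that the iteration closes and the sharp dependence $\delta\sim1/[w]_{C_p}$ survives.
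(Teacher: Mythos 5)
Your outer reduction matches the paper's: the layer-cake formula, the split at the level $w_Q$, and the Calder\'on--Zygmund decomposition of $w$ on $Q$ at each height $\la>w_Q$ are exactly the first moves in the paper's proof of Theorem~\ref{RHI-Cp-Theorem}, and they yield
\begin{equation*}
\avgint_Q w^{1+\delta}\ \leq\ \avgint_Q (M_{d,Q}w)^\delta w\ \leq\ w_Q^{1+\delta}+2^n\frac{\delta}{1+\delta}\avgint_Q(M_{d,Q}w)^{1+\delta},
\end{equation*}
so the whole problem becomes a reverse H\"older estimate for the localized maximal function $M(\chi_Q w)$ against the $C_p$-tail. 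You have correctly located the genuine difficulty: one cannot control $\sum_j\int(M\chi_{Q_j})^p w$ by $\int(M\chi_Q)^p w$ pointwise or, in any simple way, on average.

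The gap is in how you propose to resolve it. Iterating the Calder\'on--Zygmund decomposition at geometric levels $\la_m=(2^n)^m w_Q$ inside the stopping cubes would only close if one could sum the tails $\Phi_{Q_j}$ of all stopping cubes over all generations, and you do not actually produce such a bound; ``exploiting the $p$-power structure and the dyadic summability it grants'' is a hope, not an argument, and in fact the paper never establishes any estimate of the form $\sum_j\Phi_{Q_j}\lesssim\Phi_Q$. The actual mechanism is different. The paper introduces a discrete tail functional
\begin{equation*}
a_{C_p}(Q)=\sum_{k\geq0}2^{-n(p-1)k}\avgint_{2^kQ}w,
\end{equation*}
shows it is comparable to $\frac1{|Q|}\int(M\chi_Q)^pw$ (Lemma~\ref{lema-acp}), and then runs a non-local Gehring-type self-improvement for $M(\chi_Q w)$ (Proposition~\ref{porpositioncopiada}) over a one-parameter family of nested cubes $Q(x_0,r)\subset Q(x_0,\rho)$ with $R\leq r<\rho\leq 2R$, following the blueprint of Auscher--Bortz--Egert--Saari rather than the HPR absorption. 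The point that replaces your sought-for sum bound is this: the Vitali stopping cubes $Q_i$ selected at level $\la$ satisfy $\avgint_{2^kQ_i}w\leq\la$ for \emph{every} $k\geq1$, so each discrete tail obeys
\begin{equation*}
a_{C_p}(2Q_i)=\sum_{k\geq0}2^{-nk(p-1)}\avgint_{2^{k+1}Q_i}w\ \leq\ \alpha\la,
\end{equation*}
individually and uniformly, with $\alpha=(1-2^{-n(p-1)})^{-1}$. One then never sums $\Phi_{Q_i}$; one sums $|Q_i^*|\,a_{C_p}(Q_i^*)\leq\alpha\la|Q_i^*|$ and uses the disjointness of $\tfrac15Q_i$ to get $\sum_i|Q_i^*|\leq(20)^n|\bigcup_iQ_i|$. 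This converts the distributional estimate back into $\la\,|Q_\rho\cap\{M(\chi_{Q_\rho}w)>\la\}|$, and the resulting inequality between $\FI(r)=\int_{Q_r}(M(\chi_{Q_r}w))^{1+\delta}_m$ and $\FI(\rho)$ is iterated along $t_i\uparrow 2R$, closing when $\delta\lesssim 1/(\alpha\gamma)$. In short: the stopping time must be designed so that it automatically bounds the \emph{entire non-local tail} of each stopping cube by the level $\la$, and the iteration is in the spatial (radius) parameter, not in $\la$. Without this your sketch cannot be completed as written.
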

We emphasize that, even though the result is very similar to the sharp $A_\infty$ RHI, the proof is by completely different methods.

Taking advantage of the connection between the classes $A_\infty$ and $C_p$, we are able to obtain the sharp RHI for $A_\infty$ weights as a consequence of the RHI for $C_p$ weights. In this way, we know that the dependence of the $C_p$ constant is sharp.

As it is intrinsically related to the $C_p$ class, the last part of this article is devoted to the CFI. We give a quantification on the weighted inequalities between the Hardy--Littlewood maximal operator and Calder\'on--Zygmund operators. See Section 5 for precise definitions. 

\begin{theorem}
%\label{Theorem-singularintegral0}
Let $T$ be a Calder\'on--Zygmund operator  and let $q>p>1$. Then, if  $w\in C_q$ and $f \in C_c^\infty(\R^n)$, then the following estimate holds
\begin{equation}\label{eq:CFI-Cp-intro}
\|{T^*f}_{L^p(w)}\|\leq  c_{n,T,p,q} ( [w]_{C_q}+1)\log(e+[w]_{C_q}) \norm{ M  f}_{L^p(w)}   .
\end{equation}
\end{theorem}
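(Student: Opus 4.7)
The plan is to carry out a quantitative Coifman--Fefferman good-$\lambda$ argument on the level sets of $T^*f$, using the sharp reverse H\"older inequality of Theorem 1.1 to transfer Lebesgue-measure bounds to weighted ones, and then to balance a truncation parameter against $[w]_{C_q}$ to extract the asserted $([w]_{C_q}+1)\log(e+[w]_{C_q})$ constant.

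The first step records the classical local good-$\lambda$ inequality: for $\gamma \in (0,1)$ and each Whitney cube $Q_j$ of $\Omega_\lambda = \{T^*f > \lambda\}$,
\[
|\{x \in Q_j : T^*f(x) > 2\lambda,\ Mf(x) \leq \gamma\lambda\}| \leq C_{T,n}\gamma|Q_j|.
\]
Applying H\"older on $Q_j$ with exponent $1+\delta$ and then Theorem 1.1 with $\delta = 1/(B_{n,q}\max([w]_{C_q},1))$ gives, for any $E \subset Q_j$,
\[
w(E) \leq |E|^{\delta/(1+\delta)}\left(\int_{Q_j}w^{1+\delta}\right)^{1/(1+\delta)} \leq 4\left(\frac{|E|}{|Q_j|}\right)^{\delta/(1+\delta)}\int_{\R^n}(M\chi_{Q_j})^q w.
\]
Specializing $E$ to the good-$\lambda$ set, summing over $j$, and invoking a Whitney-geometry lemma to control $\sum_j \int(M\chi_{Q_j})^q w$ by $C_{n,q}\int(M\chi_{\Omega_\lambda})^q w$, I obtain the weighted good-$\lambda$
\[
w\bigl(\{T^*f > 2\lambda,\ Mf \leq \gamma\lambda\}\bigr) \leq C\gamma^{\delta/(1+\delta)}\int_{\R^n}(M\chi_{\Omega_\lambda})^q w.
\]

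Multiplying by $p\lambda^{p-1}$ and integrating in $\lambda$, the left side yields $2^{-p}\norm{T^*f}_{L^p(w)}^p$ up to a controlled $\gamma^{-p}\norm{Mf}_{L^p(w)}^p$ term from the trivial inclusion $\{T^*f>2\lambda\}\subset\{T^*f>2\lambda, Mf\leq\gamma\lambda\}\cup\{Mf>\gamma\lambda\}$. On the right, the pointwise bound $\lambda M\chi_{\Omega_\lambda}(x) \leq MT^*f(x)$, Fubini, and the assumption $q>p$ collapse the double integral to $(q/(q-p))\int(MT^*f)^p w$, producing the core inequality
\[
\norm{T^*f}_{L^p(w)}^p \leq c_{p,q,n,T}\,\gamma^{\delta/(1+\delta)}\norm{MT^*f}_{L^p(w)}^p + c_{p,q,n,T}\,\gamma^{-p}\norm{Mf}_{L^p(w)}^p.
\]

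The main obstacle is closing this inequality, since $MT^*f \geq T^*f$ pointwise and $w \in C_q$ does not imply that $M$ is bounded on $L^p(w)$, so neither direct absorption nor a naive substitution is available. My intended way around this is a Fefferman--Stein type self-improvement, combining the Alvarez--P\'erez pointwise bound $M^\#_\delta(T^*f)(x) \leq c_{T,n}Mf(x)$ (for any $0<\delta<1$) with a quantitative weighted Fefferman--Stein inequality of the form $\norm{g}_{L^p(w)} \leq \Phi([w]_{C_q})\norm{M^\#_\delta g}_{L^p(w)}$ for $C_q$ weights, established by the same good-$\lambda$ scheme as above but now applied to $g$ vs.\ $M^\#_\delta g$. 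The optimization of the truncation parameter $\gamma$ against the RHI exponent $\delta/(1+\delta) \sim 1/[w]_{C_q}$, balancing $\gamma^{1/[w]_{C_q}}$ against $\gamma^{-p}$, is what produces the $([w]_{C_q}+1)\log(e+[w]_{C_q})$ factor. The most delicate points will be verifying the Whitney-geometry lemma for $\sum_j (M\chi_{Q_j})^q$, establishing the quantitative weighted Fefferman--Stein inequality for $C_q$ weights with the correct polylogarithmic dependence, and truncating $T^*f$ appropriately so that all $L^p(w)$ norms are finite during the iteration before passing to the limit.
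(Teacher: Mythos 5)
Your plan has a gap that is fatal for the claimed dependence on $[w]_{C_q}$, and it sits exactly at the point the paper flags as its key innovation. You begin from the classical Coifman--Fefferman good-$\lambda$ with \emph{linear} decay, $|E_j|\le C\gamma|Q_j|$, and after the reverse H\"older step with $\delta\sim 1/[w]_{C_q}$ you get the weighted factor $\gamma^{\delta/(1+\delta)}\sim\gamma^{c/[w]_{C_q}}$. To make this factor small enough to absorb, you are forced to take $\gamma\sim 2^{-[w]_{C_q}/c}$, i.e. exponentially small in $[w]_{C_q}$; the trivial term $\gamma^{-p}\|Mf\|_{L^p(w)}^p$ then blows up like $2^{cp[w]_{C_q}}$. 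Your assertion that ``balancing $\gamma^{1/[w]_{C_q}}$ against $\gamma^{-p}$ ... produces the $([w]_{C_q}+1)\log(e+[w]_{C_q})$ factor'' is simply false: that balance gives exponential, not polylogarithmic, growth. The paper avoids this by replacing the linear good-$\lambda$ with Buckley's exponential-decay good-$\lambda$ inequality, $|E_j|\le Ce^{-c/\gamma}|Q_j|$. After H\"older with exponent $r=1+\delta$ and the RHI this gives $w(E_j)\lesssim e^{-c/(\gamma r')}\int(M\chi_{Q_j})^q w$ with $r'\sim[w]_{C_q}$; now smallness only requires $\gamma^{-1}\gtrsim[w]_{C_q}$, which is where the polynomial (up to a $\log$) dependence genuinely comes from. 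The introduction of the paper emphasizes precisely this: ``the use of the good-$\lambda$ inequality with exponential decay of Buckley rather than the linear decay of Coifman--Fefferman will play a main role.''

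There is a second, independent problem. Your ``Whitney-geometry lemma'' asserting $\sum_j\int(M\chi_{Q_j})^q w\le C_{n,q}\int(M\chi_{\Omega_\lambda})^q w$ with a constant depending only on $n,q$ is not a geometric fact: the analogous estimate in the paper (Lemma \ref{Lemma3Sawyer}) uses the $C_q$ condition itself and costs an additional factor of order $1/\eps\sim[w]_{C_q}$, with a further $\log(1/\eps)$ from the optimization. Ignoring this factor hides a genuine contribution to the final constant. Finally, after correctly obtaining a bound involving $\|MT^*f\|_{L^p(w)}$, you acknowledge that absorption fails and propose to switch to a Fefferman--Stein self-improvement via $M_\delta^\#(T^*f)\lesssim Mf$ and a hypothetical quantitative Fefferman--Stein inequality for $C_q$ weights. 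That inequality, if available with the claimed constant, would make the entire preceding good-$\lambda$ argument unnecessary; and establishing it is a problem of the same depth as the theorem you are trying to prove, requiring its own Whitney/Marcinkiewicz machinery. The paper instead follows Sawyer's original scheme: it introduces the Marcinkiewicz-type operator $M_{p,q}$ built from Whitney decompositions of the level sets of $T^*f$ and $Mf$, proves the two iteration lemmas (Lemmas \ref{Lemma4Sawyer} and \ref{Lemma2Sawyer}) bounding $\int(M_{p,q}Mf)^pw$ and $\int(M_{p,q}T^*f)^pw$, and then absorbs the $\int(T^*f)^pw$ term directly. Your outline does not supply any substitute for this step.
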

The proof of this theorem follows the original article of Sawyer \cite{Sawyer1} with some variants. In particular, the  quantitative RHI for $C_p$ weights above and the use of the good-$\lambda$ inequality with exponential decay of Buckley \cite{Buckley1993} rather than the linear decay of Coifman-Fefferman \cite{CoifmanFeffermanpaper} will play a main role in the argument.

For $A_\infty$ weights, the following quantification of the CFI is known:
\begin{equation}\label{eq:CFI-Ainfty-quant}
\|T^*f\|_{L^p(w)}\leq c_p [w]_{A_\infty}\ \|Mf\|_{L^p(w)}.
\end{equation}
We note that the logarithm on \eqref{eq:CFI-Cp-intro} appears as a consequence of the non-local nature of the $C_p$ condition, but based on \eqref{eq:CFI-Ainfty-quant} and the discussion on Section 4, we conjecture that the correct dependence should be linear:

\begin{conjecture}
%\label{Theorem-singularintegral0}
Let $T$ and $q,p$ as in the theorem. Then 
$$\norm{T^*f}_{L^p(w)}\leq  c_{n,T,p,q} ( [w]_{C_q}+1) \norm{ M  f}_{L^p(w)}.$$
\end{conjecture}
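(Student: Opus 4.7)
The plan is to close the gap between Theorem 2 and the conjectured linear bound by replacing the iterative good-$\lambda$ scheme with a sparse-type decomposition combined with the sharp RHI of Theorem 1. First I would retrace the proof of Theorem 2 and pin down the step producing the factor $\log(e+[w]_{C_q})$. By analogy with the $A_\infty$ theory, this logarithm should come from summing over roughly $\log[w]_{C_q}$ dyadic levels of $\{Mf>2^k\}$, since on each level the sharp RHI with exponent $\delta\sim 1/[w]_{C_q}$ is the natural tool, and each level contributes a comparable amount. Removing the logarithm therefore requires pooling these scales into a single estimate.

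The substantive step I would attempt is a pointwise sparse-domination reduction. By Lerner's theorem, $T^*f(x)\lesssim \mathcal{A}_{\mathcal{S}}|f|(x):=\sum_{Q\in\mathcal{S}}\langle|f|\rangle_Q\,\car Q(x)$ for a sparse family $\mathcal{S}$, so the problem reduces to establishing
\[
\norm{\mathcal{A}_{\mathcal{S}} f}_{L^p(w)} \le c_{n,p,q}\,([w]_{C_q}+1)\,\norm{Mf}_{L^p(w)}
\]
with linear dependence. I would dualize, writing the left side as $\sup\int \mathcal{A}_{\mathcal{S}}f\cdot g\,w$ with $\norm{g}_{L^{p'}(w)}=1$, and then reorganize the double sum so that the task becomes to bound $\sum_{Q\in\mathcal{S}}\langle|f|\rangle_Q\int_Q gw$ by a Carleson packing argument. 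The key analytic input is that $\int_Q gw$ can be controlled using the $C_q$ definition via a principal-cube decomposition, with the sharp RHI of Theorem 1 converting $w$ into $w^{1+\delta}$ with $\delta\sim 1/[w]_{C_q}$; the hope is that this exponent cancels exactly against the $[w]_{C_q}$ coming from $C_q$-testing, yielding a single linear factor at the end.

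An alternative path is to apply Buckley's exponential good-$\lambda$ inequality once, at an optimized threshold $\lambda\sim [w]_{C_q}$, rather than iterating across dyadic scales. Coupled with Theorem 1, the exponential decay ought to absorb the polynomial factor that iteration produces, directly bypassing the logarithmic loss.

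The main obstacle, in my view, is the non-local character of the $C_p$ class: the constant $[w]_{C_q}$ is defined through the tail $M\car Q$ spread over all of $\R^n$, and the dyadic techniques that give the linear $A_\infty$ bound do not decouple these tails cleanly. I expect the key difficulty will be organizing the contribution of the tails through a Whitney-type decomposition of $\R^n\setminus Q$, so that all scales contribute a single factor $[w]_{C_q}$ rather than one per scale. Without such a global reorganization the logarithm appears structurally hard to remove, which is presumably why the authors state this only as a conjecture.
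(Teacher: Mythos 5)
The statement you were asked about is a \emph{conjecture}; the paper does not prove it, and your write-up, quite honestly, does not prove it either. You present two heuristic strategies and then acknowledge that the logarithm ``appears structurally hard to remove,'' which is consistent with the authors' own assessment. So there is no proof in the paper to compare against, and yours is a research plan rather than an argument. That said, there are a couple of substantive misdiagnoses in the plan worth flagging.

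First, the source of the logarithm is not where you locate it. You suggest it comes from ``summing over roughly $\log[w]_{C_q}$ dyadic levels of $\{Mf>2^k\}$'' and propose to remove it by pooling scales. In fact, inspecting the paper's proof of Theorem~\ref{norminequalitythm}, the factor $\frac{1}{\eps}\log\frac{1}{\eps}$ already appears in the single-cube estimate of Lemma~\ref{Lemma3Sawyer}: applying the $C_q$ definition to an exponentially small level set $|E_\la|\le c\,e^{-a\la}|RQ|$ produces $w(E_\la)\lesssim e^{-\eps a\la}\int (M\car Q)^q w$, and optimizing the split point $\la$ forces $\la\sim\frac{1}{\eps}\log\frac{1}{\eps\delta}$. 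Lemmas~\ref{Lemma4Sawyer} and~\ref{Lemma2Sawyer} merely propagate this factor through the partial sums $S(k)$; they do not create it by summing over levels. So a reorganization of the dyadic levels, by itself, would not erase the logarithm.

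Second, your ``alternative path'' of applying Buckley's exponential good-$\lambda$ once at an optimized threshold is essentially what the paper already does: equation \eqref{goodlambdabuena} is Buckley's estimate, Theorem~\ref{RHI-Cp-Theorem} converts $w(E_j)$ into $e^{-c/(\gamma r')}\int(M\car{Q_j})^q w$ with $r'\sim[w]_{C_q}$, and then $\gamma$ is chosen once, optimally. The log survives because of Lemma~\ref{Lemma3Sawyer}, not because of any iteration of the good-$\lambda$ step, so this route does not bypass it. The sparse-domination reduction via $\mathcal{A}_{\mathcal S}$ is a reasonable starting point (and related in spirit to \cite{CLRT}), but the ``hope'' that the $\delta\sim 1/[w]_{C_q}$ from the RHI cancels exactly against the $C_q$-testing constant is precisely the open difficulty, not something you establish; as you note, the non-local tails $M\car Q$ prevent a clean Carleson packing. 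In short, your intuition about the obstacle is sound, but the specific diagnoses of where the log arises and how a one-shot good-$\lambda$ would avoid it are not accurate.
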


\section{Preliminaries}

We start by fixing the basic notation. By a weight we mean a non-negative locally integrable function in $\R^n$. Weights will be denoted by the symbol $w$. For a measurable set $E$, $\chi_E$ denotes the characteristic function of $E$. $M$ will denote the Hardy--Littlewood maximal operator $$Mf(x) := \sup_Q \ \frac{\chi_{\strt{1.5ex}Q}(x)}{|Q|} \int_Q|f| ,$$ where the supremum is taken over all cubes with sides parallel to the coordinate axes. For a weight $w$ and a measurable set $E$, $w(E)$ denotes $\int_Ew(x)dx$. 
Also we will be using the notation,  $\avgint_E w= \frac1{|E|} \avgint_E w$ when $E$ is of finite measure. 

We present the definition of $C_p$ as given in \cite{Muckenhoupt1981} and \cite{Sawyer1}.%The author does not impose there the finiteness explicitly, but it is actually necessary for the arguments in that article. 

\begin{definition}[$C_p$ weights] Let $1<p<\infty$. We say that a weight $w$ is of class $C_p$, and we write $w\in C_p$, if there exist $C,\eps>0$ such that for every cube $Q$ and every measurable $E\subset Q$ we have
\begin{equation}\label{Cp-definition}
w(E) \leq C \left(\frac{|E|}{|Q|}\right)^\eps \int_{\R^n} (M\chi_{\strt{1.5ex}Q}(x))^pw(x)dx.
\end{equation}
\end{definition}

It is clear, and this is a key point, that the $A_\infty$ class of weights is contained in  $C_p$ for any $p\in(1,\infty)$.

%Since we impose the finiteness in \eqref{Cp-definition}, the $C_p$ classes are not nested for different values of $p$. Moreover, we  have $A_\infty \not \subset C_p$ for any $p>1$, since $w(x)=|x|^r \not \in C_p$ for $r\geq n(p-1)$. %Even so, we do have the inclusion $A_\infty \subset \bigcup_{p>1}C_p$.

We call the quantity $\int_{\R^n} (M\chi_{\strt{1.5ex}Q})^pw$ the $C_p$-tail of $w$ at $Q$. A weight has either finite $C_p$-tails at every cube or infinite $C_p$-tails at every cube. 

 %Moreover, if $w\in A_\infty,$ then $w \in A_p$ for some $p>1$ and, since the maximal function is bounded on $L^p(w)$ we have $w\in C_p$. Thus, the inclusion $A_\infty \subset \bigcup_{p>1}C_p$ holds.

\begin{example}[\cite{Buckley-thesis}, Chapter 7]
Let $w \in A_p$ and $g$ a non-negative bounded convexly contoured function. Then $gw \in C_p$.
The weights in $C_p$ are non-doubling, and they may even vanish in a set of positive measure. 
\end{example}

The weights in this class also satisfy a non-local weak Reverse H\"older Inequality, as stated in the following proposition. We shall call this property Reverse H\"older Inequality (RHI) for $C_p$ weights, though it is not actually a proper RHI.

\begin{proposition}[Reverse H\"older Inequality for $C_p$ weights] \label{reverse-no-cuantificada}
A weight $w$ belongs to the class $C_p$ if and only if there exist $C,\delta>0$ such that for every cube $Q$ 
 \begin{equation}\label{RHIprimera}
\left(\avgint_Q w^{1+\delta}\right)^\frac1{1+\delta}\leq \frac C{|Q|} \int_{\R^n} (M\chi_{\strt{1.5ex}Q})^pw.
\end{equation}
Moreover, we have that $\delta$ in \eqref{RHIprimera} and $\eps$ in \eqref{Cp-definition} are equivalent up to a dimensional constant.
\end{proposition}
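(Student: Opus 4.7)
For the easier direction (RHI $\Rightarrow C_p$), given any measurable $E \subset Q$ the plan is to apply the classical Hölder inequality with conjugate exponents $1+\delta$ and $(1+\delta)/\delta$:
\[
w(E) = \int_Q w\chi_E \leq \left(\int_Q w^{1+\delta}\right)^{1/(1+\delta)} |E|^{\delta/(1+\delta)},
\]
and then bound the first factor by the assumed inequality \eqref{RHIprimera}. This immediately yields the $C_p$ condition \eqref{Cp-definition} with exponent $\eps = \delta/(1+\delta)$, so $\eps \asymp \delta$ whenever $\delta$ stays bounded.

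The nontrivial direction ($C_p \Rightarrow \text{RHI}$) I would establish by a Chebyshev--layer cake argument. Setting $A := |Q|^{-1}\int_{\R^n}(M\chi_Q)^p w$ and $E_t := \{x \in Q : w(x) > t\}$ (if the $C_p$-tail is infinite the inequality is vacuous), Chebyshev gives $t|E_t| \leq w(E_t)$, while the $C_p$ hypothesis applied to $E_t \subset Q$ gives $w(E_t) \leq C_0 (|E_t|/|Q|)^\eps |Q| A$. Combining these two bounds produces the key decay estimate
\[
|E_t| \leq |Q|\left(\frac{C_0 A}{t}\right)^{1/(1-\eps)}.
\]
Inserting this into the layer cake identity $\int_Q w^{1+\delta} = (1+\delta)\int_0^\infty t^\delta |E_t|\,dt$ and splitting the integral at $t = C_0 A$ (with the trivial bound $|E_t|\leq |Q|$ below the threshold and the decay estimate above), one obtains a convergent tail integral precisely when $\delta < \eps/(1-\eps)$, leading to $\int_Q w^{1+\delta} \leq C|Q| A^{1+\delta}$. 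Extracting the $(1+\delta)$-th root then gives \eqref{RHIprimera}.

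The quantitative equivalence between $\delta$ and $\eps$ is obtained by choosing, for instance, $\delta := \eps/(2(1-\eps))$, so that $\delta \asymp \eps$ uniformly (one may always assume $\eps \leq 1/2$ after replacing the $C_p$ constant with a worse one). The main technical point will be the convergence condition $\delta < \eps/(1-\eps)$, which fixes how small $\delta$ must be chosen relative to $\eps$; beyond that the argument is essentially bookkeeping, since the $C_p$ condition already provides exactly the distribution-function estimate on subsets of $Q$ that the layer cake representation requires.
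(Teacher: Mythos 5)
Your proof is correct. Worth noting first that the paper states Proposition~\ref{reverse-no-cuantificada} without proof (it is treated as known, going back to Muckenhoupt and Sawyer), so there is no in-text proof to compare against; the closest the paper comes is the proof of the \emph{quantitative} Theorem~\ref{RHI-Cp-Theorem} in Section~3. Your easy direction is the expected application of H\"older, giving $\eps=\delta/(1+\delta)$, and your hard direction is a clean Gehring-type argument: Chebyshev $t|E_t|\leq w(E_t)$ combined with the $C_p$ condition applied to $E_t\subset Q$ yields the decay $|E_t|\leq |Q|(C_0A/t)^{1/(1-\eps)}$ with $A=|Q|^{-1}\int(M\chi_Q)^pw$, and the layer-cake integral converges precisely when $\delta<\eps/(1-\eps)$, whence $\delta\asymp\eps$ after normalizing $\eps\leq 1/2$. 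This is a genuinely different (and more elementary) route than the one the paper uses for Theorem~\ref{RHI-Cp-Theorem}, which runs a Vitali-covering stopping time on $M(\chi_Q w)$ and then iterates across an increasing sequence of concentric cubes $Q(x_0,t_i)$; that more elaborate structure is needed to obtain the sharp dependence $1/\delta\sim[w]_{C_p}$ on the Fujii--Wilson-type constant with a \emph{uniform} multiplicative constant $4$ on the right-hand side, whereas your argument is parameterized by $\eps$ and produces a constant $C$ of order $1/\delta$, which is harmless for the qualitative statement you are proving but would be lossy for Theorem~\ref{RHI-Cp-Theorem}.
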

%

%Notice that we have the non-local $C_p$-tail of the weight in the right side. Proposition \ref{reverse-no-cuantificada} characterises the class $C_p$ in the same way that the standard reverse H\"older inequality characterises $A_\infty.$

We present the sharp reverse H\"older inequality for $A_\infty$ weights. Using the notation in \cite{HPR1}, we define for a positive weight $w$
$$[w]_{A_\infty}:= \sup_Q \frac1{w(Q)}\int_Q M(w\chi_{\strt{1.5ex}Q}),$$where the supremum is taken over all cubes with sides parallel to the axes. It is known that $w\in A_\infty$ if and only if \,$[w]_{A_\infty}<\infty$.

\begin{theorem}[Sharp Reverse H\"older Inequality for $A_\infty$ weights, \cite{HPR1}] \label{SHARP-RHI-Theorem} Let $w\in A_\infty$ and let $Q$ be a cube. Then
\begin{equation}\label{SHARP-RHI}
\left( \avgint_{Q}w^{1+\delta} \right)^\frac{1}{1+\delta}\leq 2 \avgint_{Q}w,
\end{equation}
for any $\delta >0$ such that $0<\delta\leq \frac{1}{2^{n+1}[w]_{A_\infty}-1}.$
\end{theorem}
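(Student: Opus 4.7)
Plan. My plan is to prove the equivalent estimate $\int_Q w^{1+\delta} \leq 2^{1+\delta} w_Q^{1+\delta} |Q|$, where $w_Q := \avgint_Q w$, by combining a Calderón--Zygmund stopping-time argument with the Fujii--Wilson form of $[w]_{A_\infty}$ recorded just above. Start from the layer-cake identity
\[
\int_Q w^{1+\delta} = (1+\delta)\int_0^\infty \lambda^\delta |\{w>\lambda\}\cap Q|\,d\lambda,
\]
and split at the threshold $\lambda=w_Q$. The low part $\lambda\leq w_Q$ is bounded trivially by $w_Q^{1+\delta}|Q|$, which is designed to absorb the ``$1$'' inside $2^{1+\delta}$.

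For $\lambda>w_Q$ I perform the Calderón--Zygmund stopping of $w$ inside $Q$ in the dyadic grid $\mathcal D(Q)$, producing disjoint maximal dyadic subcubes $\{Q_j^\lambda\}$ with $w_{Q_j^\lambda}\in(\lambda,2^n\lambda]$, and the a.e.\ inclusion $\{w>\lambda\}\cap Q \subseteq \Omega_\lambda := \{M_Q^d w>\lambda\}=\bigcup_j Q_j^\lambda$, where $M_Q^d$ denotes the dyadic maximal operator for $\mathcal D(Q)$. Applying the Fujii--Wilson inequality $\int_{Q_j^\lambda}M(w\chi_{Q_j^\lambda})\leq [w]_{A_\infty}w(Q_j^\lambda)$ inside each stopping cube, together with Chebyshev and $w(Q_j^\lambda)\leq 2^n\lambda|Q_j^\lambda|$, and using that by the maximality of the stopping cubes one has $\{M_Q^d w>\alpha\lambda\}\cap Q_j^\lambda=\{M_{Q_j^\lambda}^d w>\alpha\lambda\}$ whenever $\alpha>1$, I obtain the recursive level-set bound
\[
|\Omega_{\alpha\lambda}| \leq \frac{2^n [w]_{A_\infty}}{\alpha}\,|\Omega_\lambda|, \qquad \alpha>1,\ \lambda\geq w_Q.
\]
Choosing $\alpha=2^{n+1}[w]_{A_\infty}$ yields contraction factor $1/2$, so iteration gives $|\Omega_{\alpha^k w_Q}|\leq 2^{-k}|Q|$ for every $k\geq 0$. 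This is the decay estimate fed into the layer-cake tail.

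The main obstacle is the summation step. A naive shell-by-shell bound using only the iterated decay produces a geometric series of ratio $\alpha^{1+\delta}/2$ which exceeds $1$ once $\alpha\geq 2^{n+1}$, so one cannot simply dominate the tail shell by shell. The fix is to combine the recursive decay with the global $L^1$ control $\int_Q M(w\chi_Q)\leq [w]_{A_\infty}w(Q)$ already inherent in the Fujii--Wilson definition. Concretely, I would set $A:=\int_Q w^{1+\delta}$ and $B:=\int_Q (M_Q^d w)^{1+\delta}$, use the stopping argument to obtain a linear inequality $A\leq w_Q^{1+\delta}|Q|+C_1\delta\, B$, then use the $L^1$ constraint on $\lambda\mapsto |\Omega_\lambda|$ to produce the companion bound $B\leq C_2 [w]_{A_\infty}^{1+\delta}\, w_Q^{1+\delta}|Q|$, and finally solve the resulting linear system. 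The sharp threshold $\delta=1/(2^{n+1}[w]_{A_\infty}-1)$ arises as the critical value at which the combined estimate still delivers the constant $2^{1+\delta}$ on the right-hand side, matching the strategy carried out in \cite{HPR1}.
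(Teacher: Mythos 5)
This theorem is stated in the paper as a citation of \cite{HPR1} and is not proved in Section~2; the paper's own re-derivation of it, in Section~4, goes through Theorem~\ref{theorem-dilataciones} and a double limit $p\to\infty$, $s\to1$, and is explicitly acknowledged (Remark~4.4) to produce worse dimensional constants than $2$ and $2^{n+1}$. So you are in effect trying to reconstruct the original argument of \cite{HPR1}, and the comparison is with that proof rather than with anything in this paper.

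Your ingredients are the right ones --- Lebesgue differentiation to pass to $M_Q^d w$, layer-cake, Calder\'on--Zygmund stopping with $w_{Q_j^\lambda}\in(\lambda,2^n\lambda]$, and the Fujii--Wilson inequality applied on each stopping cube --- but the crucial step does not close as you describe. You correctly observe that the iterated level-set decay $|\Omega_{\alpha\lambda}|\le \frac{2^n[w]_{A_\infty}}{\alpha}|\Omega_\lambda|$ gives a divergent shell sum because $\alpha^{1+\delta}>2$. The proposed rescue, a ``companion bound'' $B=\int_Q(M_Q^d w)^{1+\delta}\le C_2[w]_{A_\infty}^{1+\delta}w_Q^{1+\delta}|Q|$ obtained ``from the $L^1$ constraint on $\lambda\mapsto|\Omega_\lambda|$'', is a genuine gap: the global constraint $\int_0^\infty|\Omega_\lambda|\,d\lambda\le [w]_{A_\infty}w(Q)$ together with the shell-wise decay still yields a divergent majorant for $\int_{w_Q}^\infty\lambda^\delta|\Omega_\lambda|\,d\lambda$ (each shell contributes at least $\sim\alpha^{k\delta}[w]_{A_\infty}w_Q^{1+\delta}|Q|$), and applying Chebyshev $|\Omega_\lambda|\le [w]_{A_\infty}w(Q)/\lambda$ makes $\int^\infty\lambda^{\delta-1}\,d\lambda$ diverge. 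No purely geometric decay plus $L^1$ budget argument can produce the claimed bound; and its stated form with $[w]_{A_\infty}^{1+\delta}$ is not the one that arises in any case.

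The missing idea is a self-absorption inequality, not a two-equation system with an independent bound for $B$. Using that $M_Q^d w=M_{Q_j^\lambda}^d w$ on each stopping cube $Q_j^\lambda$ (which you already observe) and the Fujii--Wilson bound $\int_{Q_j^\lambda}M_{Q_j^\lambda}^d w\le [w]_{A_\infty}w(Q_j^\lambda)\le 2^n\lambda[w]_{A_\infty}|Q_j^\lambda|$, one writes $\int_Q(M_Q^d w)^{1+\delta}=\delta\int_0^\infty\lambda^{\delta-1}\int_{\Omega_\lambda}M_Q^d w\,d\lambda$ and splits at $w_Q$ to get
\begin{equation*}
\int_Q(M_Q^d w)^{1+\delta}\;\le\;[w]_{A_\infty}w_Q^{\delta}\,w(Q)\;+\;\frac{2^n\delta[w]_{A_\infty}}{1+\delta}\int_Q(M_Q^d w)^{1+\delta},
\end{equation*}
which is absorbable precisely when $\delta(2^{n+1}[w]_{A_\infty}-1)\le 1$; feeding the absorbed bound back into $\avgint_Q w^{1+\delta}\le w_Q^{1+\delta}+\frac{2^n\delta}{1+\delta}\avgint_Q(M_Q^d w)^{1+\delta}$ then gives $\avgint_Q w^{1+\delta}\le \frac{1+\delta}{1+\delta-2^n\delta[w]_{A_\infty}}\,w_Q^{1+\delta}\le 2\,w_Q^{1+\delta}$ at the critical $\delta$, hence \eqref{SHARP-RHI}. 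Concretely: drop the $\alpha$-iteration entirely, replace the false ``companion bound'' with the self-referential inequality above, and the constants $2$ and $2^{n+1}$ fall out automatically.
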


When we compare Proposition \ref{reverse-no-cuantificada} and Theorem \ref{SHARP-RHI-Theorem}, we notice that $\int_{\R^n} (M\chi_{\strt{1.5ex}Q})^pw$ in \eqref{RHIprimera} plays the role of $w(Q)$ in \eqref{SHARP-RHI}. Keeping this similarity in mind, we define the $C_p$ constant.

\begin{definition}[$C_p$ constant]
For an arbitrary non-zero weight $w$, we define
\begin{equation*}[w]_{C_p}:= \sup_Q \frac{1}{\int_{\R^n} (M\chi_{\strt{1.5ex}Q})^pw }\int_Q M(\chi_{\strt{1.5ex}Q}w),
\end{equation*}
where the supremum is taken over all cubes $Q$ with sides parallel to the axes. 
\end{definition}

Notice that if $w$ is not identically zero, the quantity on the denominator is always strictly greater than zero. 

\begin{remark}A weight $w$ has infinite $C_p$-tails if and only if $[w]_{C_p}=0$. Indeed, if $w$ has infinite $C_p$-tails then the denominator equals infinity and we have $[w]_{C_p}=0$. Conversely, if $[w]_{C_p}=0$ we have that for every cube $Q$, $$\frac{1}{\int_{\R^n} (M\chi_{\strt{1.5ex}Q})^pw }\int_Q M(\chi_{\strt{1.5ex}Q}w) =0.$$ This means that either $\int_Q(M\chi_{\strt{1.5ex}Q}w)=0$ or $\int_{\R^n} M (\chi_{\strt{1.5ex}Q})^pw =\infty$ for every cube $Q$. In the latter case, $w$ has infinite $C_p$-tails. If $\int_Q(M\chi_{\strt{1.5ex}Q}w)=0$ for every cube, then $w$ must be zero almost everywhere.\end{remark}

By Proposition \ref{reverse-no-cuantificada} we have that a weight $w$ is in the class $C_p$ if and only if $0\leq [w]_{C_p}<\infty$.

%
%{\color{Red}
%\begin{remark}
%Let $w$ be a weight with $[w]_{C_p}=0$. Then $\int_{\R^n}( Mf )^p w =\infty$ for every non-zero function $f$. In particular, inequality \eqref{Ecuacionconpesoclasica} is true for any constant $c>0$.
%\end{remark}
%}

\begin{example}
%In dimension one, we have $[1]_{C_p} = \frac{p-1}{p+1}$, and in higher dimensions, $[1]_{C_p} \simeq_n \frac{1}{p'}.$ In particular this shows that the constant $C_p$ can be arbitrarily small. 
For $p>1$ and small $\eps$, for $w_\eps(x) = |x|^{n(p-1-\eps)}$ we have $[w_\eps]_{C_p}\lesssim \eps.$ 
This can be shown by direct computation.
\end{example}

This is the main difference between the $A_\infty$ and $C_p$ constants, since $[w]_{A_\infty}\geq 1$ for an arbitrary  weight $w$.

\begin{remark}
For any weight $w$ we have the following relation between the different constants for $q\leq p$, $[w]_{C_q} \leq [w]_{C_p} \leq [w]_{A_\infty}.$
\end{remark}

We now restate the quantitative RHI for $C_p$ weights we mentioned on the introduction.

\begin{theorem}[Quantitative RHI for $C_p$ weights]
\label{RHI-Cp-Theorem}
Let $1<p<\infty$ and let $w$ be a weight such that $0\leq [w]_{C_p}<\infty.$
Then $w\in C_p$ and $w$ satisfies, for $\delta = \frac{1}{ B \max([w]_{C_p},1)}$, with 
$$B=\frac{2^{1+4np+3n}(20)^n}{1-2^{-n(p-1)}},$$
\begin{equation}\label{RHIconeldos}\left( \avgint_Qw^{1+\delta}\right)^\frac1{1+\delta} \leq  \frac{4}{|Q|}\int_{\R^n} (M\chi_{\strt{1.5ex}Q})^pw.
\end{equation}
\end{theorem}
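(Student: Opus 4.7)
The plan is to run an iterative Calder\'on--Zygmund stopping-time decomposition on $Q$ at geometrically increasing levels, combine it with the layer-cake representation of $\int_Q w^{1+\delta}$, and use the definition of $[w]_{C_p}$ through a Chebyshev-type bound applied to $M(w\car Q)$. The appearance of the factor $(1-2^{-n(p-1)})^{-1}$ in $B$ strongly suggests that a geometric sum over dyadic annuli in $\R^n\setminus Q$, used to control the tail $T:=\int_{\R^n}(M\car Q)^p w$, will also enter the final computation.

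First I would fix $Q$, write $\lambda_0=\avgint_Q w$, and pick a parameter $A>2^n$ to be optimized at the end. For each $k\ge 0$, I perform the dyadic Calder\'on--Zygmund decomposition on $Q$ at the level $A^k\lambda_0$, extracting the maximal subcubes $\{Q_j^{(k)}\}$ with $A^k\lambda_0<\avgint_{Q_j^{(k)}}w\le 2^n A^k\lambda_0$, and set $\Omega_k=\bigcup_j Q_j^{(k)}$. The sets are nested, $w\le A^k\lambda_0$ almost everywhere on $Q\setminus\Omega_k$, and thus $E_t:=\{w>t\}\cap Q\subseteq\Omega_k$ whenever $t\ge A^k\lambda_0$.

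The next and most delicate step is to control $|\Omega_k|$. Since every point of $\Omega_k$ lies in a subcube with average greater than $A^k\lambda_0$, we have $\Omega_k\subseteq\{M(w\car Q)>A^k\lambda_0\}\cap Q$, so Chebyshev combined with the definition of the $C_p$ constant yields
\[
A^k\lambda_0\,|\Omega_k|\;\le\;\int_Q M(w\car Q)\,dx\;\le\;[w]_{C_p}\,T.
\]
I would reinforce this with the Calder\'on--Zygmund pigeonhole $|\Omega_{k+1}\cap Q_j^{(k)}|\le (2^n/A)|Q_j^{(k)}|$ applied on each parent stopping cube, making essential use of the tail monotonicity $\int(M\car{Q'})^p w\le T$ for every $Q'\subseteq Q$ (immediate from $M\car{Q'}\le M\car Q$), so that the $C_p$ estimate can be reapplied uniformly on every sub-generation of stopping cubes. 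Inserting the resulting decay for $|\Omega_k|$ into the layer-cake identity $\int_Q w^{1+\delta}=\delta\int_0^\infty t^{\delta-1}w(E_t)\,dt$, splitting dyadically on $[A^k\lambda_0,A^{k+1}\lambda_0]$, and using $w(E_t)\le w(\Omega_k)\le 2^n A^k\lambda_0\,|\Omega_k|$, reduces matters to summing a geometric series whose ratio is of the form $A^\delta$ times a factor depending on $[w]_{C_p}$. Forcing the ratio to be strictly less than $1$ constrains $\delta\le 1/(B\max([w]_{C_p},1))$, and optimizing $A$ (whose final value is dictated by the geometric sum over dyadic annuli used to control $T$, which produces the $20^n$ and $(1-2^{-n(p-1)})^{-1}$ factors in $B$) yields the stated bound $(\avgint_Q w^{1+\delta})^{1/(1+\delta)}\le 4T/|Q|$.

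The hard part is exactly the measure bound for $|\Omega_k|$: because the $C_p$ hypothesis is a single global inequality at the scale of $Q$, whose tail $T$ is not restricted to $Q$, the whole stopping cascade shares one mass budget and does not enjoy an $A_\infty$-type self-similar ratio $|\Omega_{k+1}|\le r|\Omega_k|$ with $r<1$ dimensional. The genuine geometric decay necessary for the layer-cake series to converge must therefore be extracted from a careful balancing of the non-local Chebyshev estimate against the local CZ pigeonhole, together with the annular analysis of $(M\car Q)^p$, and the optimal dependence of $\delta$ on $[w]_{C_p}$ only emerges from this trade-off. This is precisely the point at which the argument departs from Hyt\"onen--P\'erez--Rela, whose $A_\infty$ proof iterates a purely local ratio and so avoids any non-local bookkeeping, explaining the authors' warning that a genuinely different method is needed here.
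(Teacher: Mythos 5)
Your multi-generation Calder\'on--Zygmund cascade does not close, and the point where it breaks is precisely the one you flag as ``careful balancing'' without explaining: under the $C_p$ hypothesis there is no usable geometric decay of $w(\Omega_k)$ across generations. Concretely, the pigeonhole gives $|\Omega_k|\le (2^n/A)^k|Q|$, but since $w(\Omega_k)\le 2^nA^k\lambda_0|\Omega_k|$ this yields $w(\Omega_k)\le 2^{n(k+1)}\lambda_0|Q|$, which \emph{grows} in $k$, so $\sum_k A^{(k+1)\delta}\lambda_0^\delta\,w(\Omega_k)$ diverges for every $\delta>0$. The Chebyshev bound $|\Omega_k|\le [w]_{C_p}T/(A^k\lambda_0)$ is no better: it gives $w(\Omega_k)\lesssim [w]_{C_p}T$ uniformly in $k$, and $\sum_k A^{k\delta}T$ again diverges. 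No interpolation of the two exponents saves this, and applying the $(\eps,C)$-form of $C_p$ on the stopping cubes of generation $k$ does not produce a ratio $w(\Omega_{k+1})\le r\,w(\Omega_k)$ because the right-hand tail $\int(M\car{Q_j^{(k)}})^pw$ is not comparable to $w(Q_j^{(k)})$ and does not shrink with the cubes (this is exactly what Lemma~\ref{Lemma3Sawyer} is designed to handle, and it only gives a $\frac1\eps$ gain, not decay). Your route works for $A_\infty$ precisely because $w(E)\le C(|E|/|Q|)^{\eps}w(Q)$ is \emph{local} and self-similar; for $C_p$ the right-hand side is the non-local tail and the self-similarity is lost.

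The paper does something structurally different. It first proves a non-local Gehring-type absorption lemma (Proposition~\ref{porpositioncopiada}, following a remark of Auscher--Bortz--Egert--Saari): fixing $Q$ and a nested family of dilates $Q_r\subset Q_\rho$ with $R\le r<\rho\le 2R$, it establishes
\[
\FI(r)\le (\text{main term depending on }a_{C_p}(Q)) + C\,\delta\,\gamma\,\FI(\rho),
\qquad \FI(t)=\int_{Q_t}(M(\car{Q_t}w))^{1+\delta}_m,
\]
via a Vitali stopping time and a good-$\lambda$ estimate for the maximal function against itself. The convergence comes from \emph{absorption}: choosing $\delta\lesssim 1/\gamma$ makes the coefficient of $\FI(\rho)$ less than $1/2$ and the iteration over $t_i\uparrow 2R$ closes because $\FI(2R)<\infty$. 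There is no cascade of stopping-time generations and no need for geometric weight-mass decay. Only after this is the passage from $\int_Q(M(\car{Q}w))^{1+\delta}$ to $\int_Q w^{1+\delta}$ made, via a \emph{single} dyadic CZ decomposition and the pointwise bound $w\le M_{d,Q}w$. You would need to replace your generation-by-generation decay claim with this absorption mechanism (or something equivalent) to make the argument rigorous; as written, the key quantitative step is missing, and the form of $B$ in your write-up is reverse-engineered rather than derived.
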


\begin{remark}
Notice that $B$ depends on the dimension and on $p$. Moreover, we have $B\rightarrow \infty$ whenever $p$ tends to either $\infty$ or 1. 
\end{remark}

\begin{remark}
The quantification in terms of the parameters $\eps$  and $C$ in \eqref{Cp-definition} is $C=2$ and
$$\eps = \frac{1-2^{-n(p-1)}}{2^{2np+3n}(20)^n}\ \min(1,[w]_{C_p}^{-1}) .$$
In particular, we have that both $\eps$ and $\delta$ are smaller than one.
\end{remark}

\section{Proof of the RHI}

We may assume that $w$ has finite $C_p$-tails, that is, $[w]_{C_p}>0$. Indeed, if $[w]_{C_p}=0$ then the right side of \eqref{RHIconeldos} equals infinity and the theorem is trivially true.

The proof follows a remark from \cite{A-B-E-S}, section 8.1, keeping track of the dependence on the constant of the weight combined with the proof given in \cite{HPR1} of the RHI for $A_\infty$ weights. 

We now introduce a functional over cubes that serves as a discrete analogue for the $C_p$-tail. Define, for a cube $Q$
\begin{equation}\label{COLA-definition}
a_{C_p}(Q) := \sum_{k=0}^\infty 2^{-n(p-1)k}\avgint _{2^kQ}w.
\end{equation}We note that $\alpha = \sum_{k\geq 0}2^{-n(p-1)k}=(2^{n(p-1)})'<\infty$ only depends on $n$ and $p$. In the following lemma we prove that the discrete and continuous $C_p$-tails are equivalent.

\begin{lemma}\label{lema-acp}Let $\beta = \sum_{l=0}^\infty 2^{-npl}.$ Then, for every weight $w$ and every cube $Q$, we have
\begin{equation}\label{discretetail}
\frac 1\beta\: a_{C_p}(Q) \leq  \frac{1}{|Q|} \int_{\R^n} (M\chi_{\strt{1.5ex}Q})^pw \leq \frac{4^{np}}\beta \:a_{C_p}(Q).
\end{equation}
\end{lemma}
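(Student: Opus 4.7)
The plan is to sandwich $\int_{\R^n}(M\chi_Q)^p w$ between two multiples of the discrete tail $a_{C_p}(Q)$ by means of standard pointwise two-sided bounds for $M\chi_Q$ on dyadic annuli.

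First, I would set up the disjoint decomposition $\R^n = Q \,\sqcup\, \bigsqcup_{k\ge 1} A_k$ with $A_k := 2^kQ\setminus 2^{k-1}Q$. On $A_k$ I would use two elementary estimates. For the lower bound, the fact that $x\in 2^kQ$ and $Q\subset 2^kQ$ gives, by testing the definition of $M$ with the cube $2^kQ$,
\begin{equation*}
M\chi_{\strt{1.5ex}Q}(x) \;\geq\; \frac{|Q|}{|2^kQ|} \;=\; 2^{-nk}, \qquad x\in 2^kQ.
\end{equation*}
For the upper bound, any axis-parallel cube $Q'\ni x$ with $Q'\cap Q\neq\emptyset$ must satisfy $\ell(Q')\geq |x-c_Q|_\infty - \ell(Q)/2$, and on $A_k$ (with $k\ge 2$) one has $|x-c_Q|_\infty > 2^{k-2}\ell(Q)$, so $|Q\cap Q'|/|Q'|\leq |Q|/|Q'|\lesssim 2^{-nk}$. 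Treating the inner annuli $A_0=Q$ and $A_1=2Q\setminus Q$ separately by the trivial bound $M\chi_Q\leq 1$, one obtains $M\chi_Q(x)\leq C_n\cdot 2^{-nk}$ on every $A_k$, with a suitable $C_n^p\leq 4^{np}$ after a careful choice of constants.

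Next, I would split the integral along the decomposition and apply these bounds pointwise to get
\begin{equation*}
\sum_{k=0}^{\infty} 2^{-nkp}\,w(A_k)\;\leq\; \int_{\R^n}(M\chi_{\strt{1.5ex}Q})^p w \;\leq\; C_n^p\sum_{k=0}^{\infty} 2^{-nkp}\,w(A_k).
\end{equation*}
The crucial step is then to convert annular measures into full dilate measures by Abel summation: writing $w(A_k)=w(2^kQ)-w(2^{k-1}Q)$ and rearranging yields
\begin{equation*}
\sum_{k=0}^{\infty} 2^{-nkp}\,w(A_k)\;=\;\bigl(1-2^{-np}\bigr)\sum_{k=0}^{\infty} 2^{-nkp}\,w(2^kQ),
\end{equation*}
which is precisely where the factor $1/\beta = 1-2^{-np}$ arises.

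Finally, writing $w(2^kQ)=|2^kQ|\,\avgint_{2^kQ}w = 2^{nk}|Q|\,\avgint_{2^kQ}w$ turns the right-hand sum into $|Q|\cdot a_{C_p}(Q)$, giving the desired chain
\begin{equation*}
\frac{1}{\beta}\,a_{C_p}(Q)\;\leq\;\frac{1}{|Q|}\int_{\R^n}(M\chi_{\strt{1.5ex}Q})^p w\;\leq\;\frac{C_n^p}{\beta}\,a_{C_p}(Q).
\end{equation*}
The only real obstacle is bookkeeping: keeping the upper pointwise constant $C_n^p$ small enough to reach $4^{np}$, for which one uses the trivial bound $M\chi_Q\leq 1$ on the first few annuli and the distance estimate on the rest. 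Convergence issues are a non-issue since if either side is infinite the bounds hold trivially by monotone convergence of the partial sums.
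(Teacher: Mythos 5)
Your proposal is correct and follows essentially the same route as the paper: annular decomposition $A_k = 2^kQ\setminus 2^{k-1}Q$, the two-sided pointwise bounds $2^{-nk}\leq M\chi_Q\leq C_n 2^{-nk}$ on $A_k$, and then an identity relating $\sum_k 2^{-nkp}w(A_k)$ to $|Q|\,a_{C_p}(Q)/\beta$. The only cosmetic difference is that you package the final rearrangement as Abel summation, while the paper expands $\avgint_{2^kQ}w$ into annular pieces and swaps the order of the resulting double sum (Tonelli), which also sidesteps the vanishing of the boundary term $2^{-nNp}w(2^NQ)$ that your partial-sum argument tacitly uses.
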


As a corollary of this, we have that $a_{C_p}(Q)<\infty$ for every cube $Q$ whenever $w$ has finite $C_p$-tails.

\begin{proof}

Observe that $\beta = \sum_{l=0}^\infty 2^{-npl}=(2^{np})'$  and hence $\beta<2.$ Note that for  $x\in2^kQ\setminus 2^{k-1}Q$ we have $2^{-kn} \leq M\chi_{\strt{1.5ex}Q}(x) \leq 2^{-n(k-2)}$. Then
\begin{align*}
\frac{1}{|Q|}\int_{\R^n} (M\chi_{\strt{1.5ex}Q})^pw & = \avgint_Q w +\sum_{k=1}^\infty \frac{1}{|Q|}\int_{2^kQ\setminus2^{k-1}Q} (M\chi_{\strt{1.5ex}Q})^pw ,
%& \simeq  \fint_Q w +\frac{1}{|Q|}\sum_{k=1}^\infty2^{-knp} \int_{2^kQ\setminus2^{k-1}Q} w.
\end{align*} 
so we actually have
\begin{align*}
\avgint_Q w +\sum_{k=1}^\infty \frac{2^{-npk}}{|Q|}w({2^kQ\setminus2^{k-1}Q}) & \leq \frac{1}{|Q|}\int_{\R^n} (M\chi_{\strt{1.5ex}Q})^pw \\
& \leq  \avgint_Q w +\sum_{k=1}^\infty \frac{2^{-np(k-2)}}{|Q|}w(2^kQ\setminus2^{k-1}Q) \\
& \leq 4^{np} \left(  \avgint_Q w +\sum_{k=1}^\infty \frac{2^{-npk}}{|Q|}w(2^kQ\setminus2^{k-1}Q) \right)
\end{align*}
Now we rewrite \eqref{COLA-definition} in the following way
\begin{align*}
\sum_{k=0}^\infty 2^{-n(p-1)k}\avgint _{2^kQ}w &= \avgint_Qw+ \sum_{k=1}^\infty  \frac{2^{-npk}}{|Q|} \left( \int_Qw+\sum_{j=1}^{k}\int_{2^jQ\setminus 2^{j-1}Q}w \right) \\
& = \beta\avgint_Qw+\frac{1}{|Q|}\sum_{j=1}^\infty \left(\sum_{k=j}^\infty 2^{{-npk}} \right) \int_{2^jQ\setminus 2^{j-1}Q}w \\
%& = \beta \fint_Qw + \frac{1}{|Q|} \sum_{j=1}^\infty 2^{-pnj}\beta \int_{2^jQ\setminus 2^{j-1}Q}w\\
& = \beta\left( \avgint_Qw + \frac{1}{|Q|} \sum_{j=1}^\infty 2^{-pnj} \int_{2^jQ\setminus 2^{j-1}Q}w\right).
\end{align*}
This finishes the proof of \eqref{discretetail}.
\end{proof}

\begin{proposition}\label{porpositioncopiada}
Let $w$ be a weight and $p>1$. Suppose that there exists a constant $0<\gamma<\infty$ such that for every cube $Q$
\begin{equation}\label{HIPOTESIS-CP}
\avgint_Q M(\chi_{\strt{1.5ex}Q} w) \leq \gamma \, a_{C_p}(Q)<\infty.
\end{equation}
Then there exists $0<\delta\leq \frac1{A \max(\gamma,1)}$, with 
$$A=20^n\frac{2^{1+3n}}{1-2^{-n(p-1)}},$$ such that for every cube $Q$,
\begin{equation*}\label{RHI-paralaM}
 \avgint_Q M(\chi_{\strt{1.5ex}Q}w)^{1+\delta} \leq 2^{1+n(2p+3)}\: \gamma \: a_{C_p}(Q)^{1+\delta}.
\end{equation*}
\end{proposition}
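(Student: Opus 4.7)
The strategy is to mimic the sharp $A_\infty$ reverse H\"older argument of \cite{HPR1}, with the discrete $C_p$-tail $a_{C_p}(Q)$ playing the role of the cube average $\avgint_Q w$. The main object is the distribution of $M(\chi_Q w)$: via layer cake,
$$\avgint_Q M(\chi_Q w)^{1+\delta} = \frac{1+\delta}{|Q|}\int_0^\infty t^\delta\,\bigl|\{x\in Q : M(\chi_Q w)(x)>t\}\bigr|\,dt.$$
I would split this integral at a threshold $t_0\sim a_{C_p}(Q)$: the low-$t$ part is handled trivially using the hypothesis $\avgint_Q M(\chi_Q w)\le \gamma\,a_{C_p}(Q)$, while the high-$t$ part is treated by a Calder\'on--Zygmund stopping-time argument.

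For $t>t_0$, I would perform a dyadic CZ decomposition of $M(\chi_Q w)$ relative to $Q$ at level $t$, producing maximal pairwise disjoint subcubes $\{P_j\}\subset Q$ with $t<\avgint_{P_j}M(\chi_Q w)\le 2^n t$, whose union $\Omega_t$ covers the super-level set up to null sets. The crucial localization is the pointwise bound $M(\chi_Q w)(x)\le M(\chi_{P_j} w)(x) + c_n\, t$ for a.e.\ $x\in P_j$: the cubes realizing the maximal function at $x$ are split into those contained in $P_j$ (producing $M(\chi_{P_j}w)(x)$) and those properly containing $P_j$ (controlled by $c_n t$ via CZ maximality of the stopping cube). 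Combined with the hypothesis $\int_{P_j} M(\chi_{P_j} w)\le \gamma\, a_{C_p}(P_j)\,|P_j|$ applied on each $P_j$, the high-$t$ contribution reduces to controlling the aggregate $\sum_j a_{C_p}(P_j)\,|P_j|$.

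The technical heart of the proof, and what I expect to be the main obstacle, is the aggregation step that passes from the local tails $a_{C_p}(P_j)$ back to the global tail $a_{C_p}(Q)$. After expanding $a_{C_p}(P_j)=\sum_{k\ge 0} 2^{-n(p-1)k}\avgint_{2^k P_j}w$ and exchanging the two summations, the geometric decay $2^{-n(p-1)k}$ pairs with a covering lemma (multiplicity bounded by a dimensional constant $\lesssim 20^n$) for the dilates $\{2^k P_j\}_j$ to produce
$$\sum_j a_{C_p}(P_j)\,|P_j|\ \lesssim\ \frac{20^n}{1-2^{-n(p-1)}}\, a_{C_p}(Q)\,|\Omega_t|,$$
which accounts exactly for the constant $A$ in the statement. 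Feeding this back into the layer cake gives an inequality of self-improving type for the integral $\avgint_Q M(\chi_Q w)^{1+\delta}$; it closes upon choosing $\delta\sim 1/(A\gamma)$, with the combinatorial factor $2^{1+n(2p+3)}$ accumulating from the splittings performed above. Pinning down these dimensional constants precisely is where the real work lies; once that is done, the optimization of $\delta$ in the resulting good-$\lambda$ loop is routine.
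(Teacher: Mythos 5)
Your high-level blueprint --- layer cake, local application of the hypothesis, aggregation, absorption --- does match the paper's, but the aggregation step you single out as the ``technical heart'' is not a technicality: it is false as stated. For arbitrary disjoint $P_j\subset Q$ the inequality $\sum_j a_{C_p}(P_j)\,|P_j|\lesssim a_{C_p}(Q)\,|\Omega_t|$ fails: take $w=K\chi_{\strt{1.5ex}P_1}$ with $K$ large and $|P_1|$ small; then $a_{C_p}(P_1)|P_1|\sim K|P_1|$ while $a_{C_p}(Q)|P_1|\sim K|P_1|^2/|Q|$, so the ratio diverges as $|P_1|\to 0$. The covering input you invoke is also unavailable: the dilated family $\{2^kP_j\}_j$ of a disjoint family of cubes does not have multiplicity bounded by a dimensional constant --- the overlap grows like $2^{nk}$, which after pairing with $2^{-n(p-1)k}$ only yields a convergent sum when $p>2$.

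What actually closes the argument is a stopping-time condition on $w$ over \emph{all concentric dilates} of the selected cubes, not a post-hoc aggregation, and this is precisely what a dyadic Calder\'on--Zygmund decomposition of $M(\chi_{\strt{1.5ex}Q}w)$ cannot deliver: dyadic maximality controls $\avgint_{\hat P_j}M(\chi_{\strt{1.5ex}Q}w)\leq t$ for the dyadic parent, not $\avgint_{2^kP_j}w$ for all $k\geq 1$, since $2^kP_j$ overlaps cubes that need not be ancestors of $P_j$. The paper instead runs a Vitali selection on the superlevel set of an auxiliary maximal function $\tilde M$ of $w$, choosing at each point the largest scale $k_x$ with $\avgint_{Q(x,2^{k_x}(\rho-r))}w>\lambda$. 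By maximality the selected cubes $Q_i$ satisfy $\avgint_{2^kQ_i}w\leq\lambda$ for every $k\geq 1$, so that $a_{C_p}(2Q_i)\leq\alpha\lambda$ \emph{for each $i$ individually}; the sum $\sum_i|2Q_i|$ is then handled by the Vitali $\tfrac15$-disjointness alone, which is where $20^n$ enters. The resulting level-set bound is $u(Q_r\cap\{u>\lambda\})\lesssim\gamma\lambda\,|\Omega_\lambda|$ --- linear in $\lambda$, not in $a_{C_p}(Q)$ --- and this is what lets the self-improvement close with $\delta\sim 1/\gamma$. Two further points you omit but which the paper cannot do without: the Vitali cubes can protrude beyond $Q$, so the estimate is proved for a nested family $Q_r\subset Q_\rho\subset 2Q$ and then iterated with $t_i\to 2R$; and $M(\chi_{\strt{1.5ex}Q}w)$ is truncated at height $m$ with Fatou applied at the end, since finiteness of $\int_Q M(\chi_{\strt{1.5ex}Q}w)^{1+\delta}$ is not known a priori.
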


Note that the infimum of the constants $\gamma$ such that \eqref{HIPOTESIS-CP} holds is equivalent to the $C_p$ constant of $w$, because of Lemma \ref{lema-acp}. In this case we will have $0<[w]_{C_p}<\infty$.

\begin{proof}
Fix a cube $Q=Q(x_0,R)$, that is, the cube centered at the point $x_0$ and with side length $2R$ ($Q(x,R)$ is just a ball with the $l^\infty$ distance in $\R^n$). The proof will be carried out following some steps.

{\it Step 1.} Let $r, \rho>0$ and $l\in \Z$ be numbers that satisfy $R\leq r<\rho\leq 2R$ and $2^l(\rho-r)=R$. This in particular implies $l\geq 0$. 

We define a new maximal operator 
$$\tilde Mv(x) := \sup_{k\in \Z} \avgint_{Q(x,2^k(\rho-r))}|v|.$$
We have the following pointwise bounds between the different maximal functions
$$\tilde M v \leq Mv \leq \kappa \tilde Mv,$$ where $\kappa$ does not depend on $\rho-r$. In particular, we can choose $\kappa = 4^n$.
For $t\geq 0$ and a function $F$ we define $F_t = \min(F,t)$. Now fix $m>0$ with the intention of letting $m\rightarrow \infty$ in the end. Call $Q_{r}=Q(x_0,r)$ and $Q_{\rho}=Q(x_0,\rho).$

%We want to prove
%$$\int_{Q_r}(M\chi_{\strt{1.5ex}Q_r})^{\delta+1}_m \leq c_{1} \gamma |Q| 2^{np\delta}(a_{C_p}(Q))^{1+\delta}+ c_{2} \gamma \delta \int_{Q_\rho}(M\chi_{\strt{1.5ex}Q_\rho})^{1+\delta}_m,$$
%and then following an iteration scheme, along with some convergence theorem, conclude the statement of the proposition.

%{\color{magenta} observa que hay en la primera linea abajo y solo en la primera para que compares, el $Q$ de los $\chi_{\strt{1.5ex}Q}$ est\'an "bajados"}

 We then have 
 \begin{align*} \int_{Q_{r}} (M(\chi_{\strt{1.5ex}Q_{r}}w))^{1+\delta}_m & \leq \kappa ^{1+\delta} \int_{Q_{r}}(\tilde M(\chi_{\strt{1.5ex}Q_{r}   }w))^{\delta}_{m}\:\tilde M (\chi_{\strt{1.5ex}Q_{r}}w) \\
& \leq \kappa^{1+\delta}\int_{Q_{r}}(\tilde M(\chi_{\strt{1.5ex}{Q_{\rho}}}w))^{\delta}_{m}\: \tilde M (\chi_{\strt{1.5ex}Q_{\rho}}w) \\
& \leq \kappa^{1+\delta}\delta \int_0^{m} \la^{\delta-1}u(Q_{r}\cap \{u>\la\})d\la,
\end{align*}
where $u= \tilde M (\chi_{\strt{1.5ex}Q_{\rho}}w).$ To state it in a separate line, we have
\begin{equation}\label{STEP3}
\int_{Q_{r}} (M(\chi_{\strt{1.5ex}Q_{r}}w))^{1+\delta}_m \leq  \kappa^{1+\delta}\delta \int_0^{m} \la^{\delta-1}u(Q_{r}\cap \{u>\la\})d\la.
\end{equation}

{\it Step 2.}
Now we pick $\la_0:= 2^{n(l+1)} a_{C_p}(2Q)$ (which is finite by hypothesis). It is easy to see that for $x\in Q_{r}$ and $k\geq0$, by the choice of $\la_0$ we have \begin{equation} \label{STEP4}
\avgint_{Q(x,2^k(\rho-r))}\chi_{\strt{1.5ex}Q_{\rho}}w \leq  \la_0.
\end{equation}

Indeed, we have that $Q_\rho \subset 2Q$, so we can make
\begin{align*}
\avgint_{Q(x,2^k(\rho-r))}\chi_{\strt{1.5ex}Q_{\rho}}w & \leq \avgint_{Q(x,2^k(\rho-r))}\chi_{\strt{1.5ex}2Q}w \\
& = \frac{|2Q|}{|Q(x,2^k(\rho-r))|}\avgint_{2Q}w \\
& \leq 2^{n(l+1-k)} a_{C_p}(2Q) \leq 2^{n(l+1)}a_{C_p}(2Q).
\end{align*}

%
%Indeed, if $k\geq l+2,$ for $y\in 2Q$ we have
%$$|y-x|_\infty \leq |y-x_0|_\infty+|x-x_0|_\infty \leq 2R+r \leq 2^{l+2}(\rho-r)\leq 2^k(\rho-r),$$ so $2Q \subseteq Q(x,2^k(\rho-r))$, which in turn implies
%\begin{align*}\fint_{Q(x,2^k(\rho-r))}\chi_{\strt{1.5ex}Q_{\rho}}w &\leq \frac{|2Q|}{|Q(x,2^k(\rho-r))|}\fint_{2Q} w\\
%& \leq \frac{(2^n)^{l+1}(\rho-r)^n}{(2^n)^k(\rho-r)^n}a_{C_p}(2Q)\\
%& \leq 2^{n(l+1)}a_{C_p}(2Q) \leq \la_0.
%\end{align*}
%
%Now, if $0\leq k \leq l+1$ then for $y\in Q(x,2^k(\rho-r))$ we have
%$$|y-x_0|_\infty \leq |y-x|_\infty+|x-x_0|_\infty \leq 2^k(\rho-r)+r\leq (2^k+2^{l+1})(\rho-r) \leq 2^{l+2}(\rho-r),$$ 
%which implies $Q(x,2^k(\rho-r))\subseteq Q(x_0,2^{l+2}(\rho-r))$. Thus,
%\begin{align*}\fint_{Q(x,2^k(\rho-r))}\chi_{\strt{1.5ex}Q_{\rho}}w &\leq \frac{|2^{l+2}Q|}{|Q(x,2^k(\rho-r))|}\fint_{2^{l+2}(2Q)} w\\
%& \leq \frac{(2^n)^{l+2}(\rho-r)^n}{(2^n)^k(\rho-r)^n} 2^{n(l+2)(p-1)}a_{C_p}(2Q)\\
%& \leq 2^{n(l+2)p}a_{C_p}(2Q) \leq \la_0.
%\end{align*}
This completes the proof of \eqref{STEP4} when $x\in Q_r$ and $k\geq0$.

Let $\la>\la_0$ and $x \in Q_{r}\cap \{u>\la\}$. As $u(x) =\tilde M(\chi_{\strt{1.5ex}Q_{\rho}}w)(x)>\la>\la_0$, \eqref{STEP4} and the fact $Q(x,2^k(\rho-r))\subset Q_{\rho}$ when $k<0$ imply
$$u(x) = \sup_{k<0}\avgint_{Q(x,2^k(\rho-r))}\chi_{\strt{1.5ex}Q_{\rho}}w \:= \sup_{k<0} \avgint_{Q(x,2^k(\rho-r))}w.$$ 
For such an $x$, let $k_x = \max\{k : \: \avgint_{Q(x,2^k(\rho-r))}w >\la\}$. Trivially,  we have $$Q_r \cap\{u>\la\} \subset \bigcup_{x\in Q_r \cap\{u>\la\}} Q(x,\frac15 2^{k_x}(\rho-r)).$$
We use the Vitali covering lemma for infinite sets and choose a countable collection of $x_i\in Q_r \cap\{u>\la\}$ so that the family of cubes $Q_i = Q(x_i,2^{k_{x_i}}(\rho-r))$ satisfy the following properties:
\begin{itemize}
\item $Q_r \cap\{u>\la\} \subset \cup_i Q_i,$
\item the cubes $\frac15 Q_i$ are pairwise disjoint,
\item $\avgint_{Q_i}w >\la$,
\item $\avgint_{2^k Q_i}w \leq \la$, for any $k\geq 1$ 
\item $Q_i\subset Q_\rho$.
\end{itemize} 

We make the following claim. If we denote $Q_i^*= 2Q_i$ then for all $x\in Q_i\cap Q_{r}$, $$u(x)\leq 2^{n}M(\chi_{\strt{1.5ex}Q_i^*}w)(x).$$

Indeed, fix $x \in Q_i\cap Q_{r}$ and $k<0$. If $k\geq k_{x_i}$ then by the stopping time we get
\begin{align*}
\avgint_{Q(x,2^k(\rho-r))}w & \leq \frac{|Q(x_i,2^{k+1}(\rho-r))|}{|Q(x,2^{k}(\rho-r))|}\avgint_{Q(x_i,2^{k+1}(\rho-r))}w \\
&\leq 2^n \la \leq 2^n \avgint_{Q_i}w \leq 2^n M(\chi_{\strt{1.5ex}Q_i^*}w)(x).
\end{align*}
In the other case, namely $k< k_{x_i}$  we have $Q(x,2^k(\rho-r))\subset Q_i^*\cap Q_{\rho}$ and hence
$$\avgint_{Q(x,2^k(\rho-r))} w  \leq M(\chi_{\strt{1.5ex}Q_i^*}w)(x),$$ and thus the claim is proved.

{\it Step 3.}
We use now this claim together with the stopping time and the hypothesis \eqref{HIPOTESIS-CP} to see
\begin{align*}
u(Q_{r}\cap \{u>\la\}) & \leq \sum_i u(Q_i\cap Q_{r}) \leq \sum_i \int_{Q_i\cap Q_{r}}u  \leq 2^n \sum_i \int_{Q_i\cap Q_{r}}M(\chi_{\strt{1.5ex}Q_i^*}w)\\
& \leq 2^n \sum_i |Q_i^*| \avgint_{Q_i^*}M(\chi_{\strt{1.5ex}Q_i^*}w)  \leq 2^n \gamma  \sum_i |Q_i^*| a_{C_p}(Q_i^*) \end{align*}
But, using the properties of $Q_i$ we get
$$a_{C_p}(Q_i^*) =\sum_{k=0}^\infty 2^{-nk(p-1)}\avgint_{2^{k+1}Q_i}w \leq \la \alpha,$$ so we have
\begin{align*}
u(Q_{r}\cap \{u>\la\}) &  \leq 2^n \gamma \sum_i |Q_i^*| \alpha \la 
 \leq (20)^n \gamma \alpha \: |\cup_iQ_i| \la,\end{align*}
where in the last inequality we have used that $\frac15 Q_i$ are disjoint. Since each one of the cubes $Q_i\subset Q_{\rho}$ and $\la <\avgint_{Q_i}w$ we have $\cup_iQ_i \subset Q_{\rho}\cap\{M(\chi_{\strt{1.5ex}Q_{\rho}}w)>\la\}$ so we have obtained for $\la >\la_0$
\begin{equation*}\label{STEP5}
u(Q_{r}\cap\{u>\la \}) \leq (20)^n \alpha \gamma \la |Q_{\rho}\cap\{M(\chi_{\strt{1.5ex}Q_{\rho}}w)>\la\}|.
\end{equation*}
Plugging everything on what we had in \eqref{STEP3} we have
\begin{align*}
\int_{Q_{r}} (M(\chi_{\strt{1.5ex}Q_{r}}))^{1+\delta}_m & \leq  \kappa^{1+\delta}\la_0^\delta u(Q_{r})  + \kappa^{\delta+1} (20)^n \gamma \alpha  \delta \int_{\la_0}^{m} \la^{\delta}|Q_{\rho}\cap \{M(\chi_{\strt{1.5ex}Q_{\rho}}w)>\la\}|d\la.
\end{align*}

{\it Step 4.} We define
$$\FI(t) = \int_{Q_t}  (M(\chi_{\strt{1.5ex}Q_t}w))^{1+\delta}_m  \qquad t>0.
$$ 
Observe that $\FI(t)  < \infty$ for any $t>0$. We claim that,
\begin{equation}\label{iteration}
\FI(r) \leq 
c_1\gamma |Q|2^{nl\delta}\left(a_{C_p}(Q)\right)^{1+\delta} + \delta \,\kappa^{\delta+1} (20)^n \gamma \alpha \FI(\rho).
\end{equation}
Indeed, combining what we obtained before in the following way:
\begin{align*}
\FI(r) & \leq c_1\gamma|Q|2^{nl\delta}\left(a_{C_p}(Q)\right)^{1+\delta} + \kappa^{\delta+1} (20)^n \gamma \alpha  \frac{\delta }{\delta+1} \int_{Q_{\rho}} M(\chi_{\strt{1.5ex}Q_{\rho}}w)^{\delta+1}_{m} \\
& \leq c_1\gamma |Q|2^{nl\delta}\left(a_{C_p}(Q)\right)^{1+\delta} + ( \kappa^{\delta+1} (20)^n \gamma \alpha) \delta \FI(\rho),
\end{align*}
where $c_1=2^{n(p+1)(\delta+1)}$, and where we have used
$$u(Q_r) = \int_{Q_r}\tilde M(\chi_{\strt{1.5ex}Q_\rho}w) \leq |2Q| \avgint _{2Q}M(\chi_{\strt{1.5ex}2Q}w) \leq 2^n |Q| \gamma a_{C_p}(2Q)\leq 2^{np} |Q|\gamma a_{C_p}(Q),$$
since
$$a_{C_p}(2Q)\leq 2^{n(p-1)}a_{C_p}(Q).$$
This yields the claim.

\textit{Step 5.} Now we present an iteration scheme  starting from claim \eqref{iteration}. Remember that $l\geq 0$ was an integer such that $2^l(\rho-r)=R$. Set 
\begin{align*}
t_0 &= R,\\
t_{i+1}&=t_i+2^{-(i+1)}R= \sum_{j=0}^{i+1}2^{-j}R, \quad i\geq 0.
\end{align*}
Clearly, $t_i\rightarrow 2R$ as $i\rightarrow \infty$. This way, $2^{i+1}(t_{i+1}-t_i)=R$ and we can use them as $\rho=t_{i+1}$, $t_i = r$, and $l=i+1$   in \eqref{iteration}.

In other words, we have the estimate for $\FI(t_i)$ in terms of $\FI(t_{i+1})$:
$$\FI(t_i) \leq c_2 2^{n\delta i}+ c_3  \FI(t_{i+1}),$$
%$$\FI(t_i) \leq c \gamma |Q| (a_{C_p}(Q))^{1+\delta} 2^{np(i+1)(\delta+1)}+ ( \kappa^{\delta+1} 2^n \gamma \alpha) \delta \FI(t_{i+1}),$$
where $c_2 =c_1 2^{n\delta} \gamma |Q| (a_{C_p}(Q))^{1+\delta},$ $c_3 =\kappa^{\delta+1} 20^n \alpha  \gamma  \delta$. So, iterating this last inequality $i_0$ times we get
\begin{align*}
\FI(R)&=\FI(t_0)\leq c_2 \sum_{j=0}^{i_0-1} (c_3 2^{n\delta})^j + c_3 ^{i_0}\FI(t_{i_0}) 
\leq c_2 \sum_{j=0}^{i_0-1} (c_3 2^{n\delta})^j + (c_3)^{i_0}\FI(2R)
\end{align*}

We have to choose $\delta>0$ so that we have the relation
\begin{equation} \label{elecciondedelta}
c_3 2^{n\delta}= 20^n \kappa^{\delta+1} \gamma \alpha\delta2^{n\delta}<1/2.
\end{equation}
%We will explain the choice of $\delta$ later, but we will have $0<\delta<1${\color{magenta} No me gusta mucho la forma de expresarlo}.
We may suppose $\delta<1$. Once we have \eqref{elecciondedelta}, we can take the limit $i_0\rightarrow \infty$ and the sum is bounded by 2 and the second term goes to zero since $\FI(2R)<\infty$. Hence
\begin{align*}
\FI(R) & \leq  2c_2  = 2^{1+n\delta+n(\delta+1)(p+1)} \gamma |Q| (a_{C_p}(Q))^{1+\delta}\\
% & \leq  \kappa^{2}2^{2np+1+np} \gamma |Q| (a_{C_p}(Q))^{1+\delta} \\
 & <  2^{1+n(2p+3)} \gamma |Q| (a_{C_p}(Q))^{1+\delta},
\end{align*}
and then 
$$\frac{1}{|Q|} \int_{Q}M(\chi_{\strt{1.5ex}Q} w)^{1+\delta}_m \leq 2^{1+n(2p+3)}  \gamma \left( a_{C_p}(Q)\right)^{1+\delta}.$$
Now, letting $m\rightarrow \infty$ and using the Fatou lemma we can conclude the proof.

To finish the proof, we make the choice of $\delta$ as follows. Coming back to \eqref{elecciondedelta} we see that, since we have $\delta$ in the exponent and $\gamma$ can be arbitrarily small, we have to choose $\delta = \frac{1}{A\max(1,\gamma)}$ with \begin{equation*}A=2\kappa^2 (20)^n2^{n}\alpha= (20)^n\frac{2^{1+3n}}{1-2^{-n(p-1)}}.\qedhere\end{equation*}
\end{proof}

We are ready to finally prove the theorem.

\begin{proof}[Proof of Theorem \ref{RHI-Cp-Theorem}]
Fix a cube $Q$. Let $M_{d,Q}$ denote the maximal operator with respect to the dyadic children of $Q$, that is $$M_{d,Q}v(x) = \sup_{\substack{R\in \mathcal D(Q) \\ x \in R}}\frac{1}{|R|}\int_R|v|, \quad x \in Q.$$
We argue as in \cite{HPR1}, Theorem 2.3. By the Lebesgue differentiation theorem,
\begin{align*}
\int_Qw^{1+\delta} \leq \int_{Q}(M_{d,Q}w)^\delta w.
\end{align*}
Call now $\Omega_\la = \{x\in Q: M_{d,Q}w(x)>\la\}$. For $\la\geq w_Q$ we make the Calder\'on--Zygmund decomposition of $w$ at height $\la$ to obtain $\Omega_\la= \cup_j Q_j$ with $Q_j$ pairwise disjoint and $$\la <\frac{1}{|Q_j|} \int_{Q_j}w\leq 2^n \la.$$Multiplying by $|Q_j|$ and summing on $j$ this inequality chain becomes 
$$\la |\Omega_\la| \leq w(\Omega_\la) \leq 2^n \la |\Omega_\la|.$$

Then we have
\begin{align*}
\avgint_Q (M_{d,Q}w)^\delta w & = \frac{1}{|Q|}\int_0^\infty \delta \la^{\delta-1}w(\Omega_\la)d\la \\
& \leq w_{Q}^{\delta+1}+ \frac{1}{|Q|}\int_{w_Q}^\infty \delta \la^{\delta-1}w(\Omega_\la) d\la \\
& \leq w_{Q}^{\delta+1}+ \delta 2^n \frac{1}{|Q|}\int_{w_Q}^\infty  \la^{\delta}|\Omega_\la| d\la \\
& \leq w_{Q}^{\delta+1} + 2^n \frac{\delta}{\delta+1} \frac{1}{|Q|}\int_Q (M_{d,Q}w)^{1+\delta}.
\end{align*}
Now we apply Proposition \ref{porpositioncopiada}. We have $[w]_{C_p}\leq \beta \gamma \leq 4^{np}[w]_{C_p}$, so we need $\delta \leq \beta/A(\max(1,[w]_{C_p}),$ with $\beta$ as in Lemma \ref{lema-acp}. So we get 
\begin{align*}
\avgint_Q (M_{d,Q}w)^\delta w & \leq (1+2^{1+n(2p+4)} \frac{\delta}{\delta+1} \gamma )\left(a_{C_p}(Q)\right)^{1+\delta}\\
& \leq (1+2^{1+n(2p+4)} \frac{\delta}{\delta+1} [w]_{C_p} \frac{4^{np}}{\beta} )\left(\frac{\beta}{|Q|}\int (M\chi_{\strt{1.5ex}Q})^pw \right)^{1+\delta},
\end{align*}
where we have used Lemma \ref{lema-acp}. Now, since we have $2^{4np}/\beta$ multiplying $\delta$, we have to change the choice of $\delta$ slightly and make $$\delta \leq\frac{2^{-4np}}{\beta} \frac\beta{A\max(1,[w]_{C_p})} =   \frac1{B \max(1,[w]_{C_p})}.$$
This finishes the proof of the theorem.
\end{proof}

\section{Sharpness of the exponent} 

For a cube $Q$, it is clear that $M\chi_{\strt{1.5ex}Q}$ equals 1 on the cube and is smaller than 1 outside the cube. Therefore $(M\chi_{\strt{1.5ex}Q})^p$ converges to $\chi_{\strt{1.5ex}Q}$ a.e. when $p\rightarrow \infty$. Moreover, for a weight $w$ with finite $C_{p_0}$-tails, by the dominated convergence theorem we have
$$\lim_{p\rightarrow \infty} \int_{\R^n} (M\chi_{\strt{1.5ex}Q})^pw = w(Q).$$

For any weight $w\in A_\infty$, we have by the definition of the constant $[w]_{A_\infty}$ that for any cube $Q$ 
$$\int_Q M(w\chi_{\strt{1.5ex}Q}) \leq [w]_{A_\infty}w(Q) \leq[w]_{A_\infty} a_{C_p}(Q),$$
where  $a_{C_p}(Q)=\sum_{k\geq 0}2^{-n(p-1)k}\avgint_{2^kQ}w$ is the discrete $C_p$-tail introduced in the previous section.

If we modify slightly the proof of Proposition \ref{porpositioncopiada} and Theorem \ref{RHI-Cp-Theorem} and add some extra hypothesis, we can recover the RHI for $A_\infty$ weights. We explain how to do this in this section.

Fix a number $s>1$. This will be the dilation parameter, which was $s=2$ in the previous section. We plan on letting $t$ tend to one in the end. We introduce the corresponding discrete $C_p$-tail with respect to $t$,
$$a_{C_p,s} (Q)= \sum_{k\geq0}s^{-n(p-1)k} \avgint_{s^k Q}w.$$
Note that for any weight $w\in C_{p_0}$ we have $\lim_{p\rightarrow \infty} a_{C_p,s}(Q) = w_Q$ for any $s>1$. Also, for a fixed $s>1$ we introduce the corresponding discrete $C_p$ constant $$[w]_{C_p,s} := \sup_Q \frac{\int_Q M(\chi_{\strt{1.5ex}Q} w)}{a_{C_p,s}(Q)}$$

\begin{remark}\label{REMARKCP}
For a weight $w \in A_\infty$ and any $s>1$ we have $\lim_{p\rightarrow\infty}[w]_{C_p,s} \leq [w]_{A_\infty}$.
\end{remark}

%{\color{magenta} ES TAN CLARO? 
%CREO QUE BASTA 
%$$  [w]_{C_p,s} < [w]_{A_\infty}  $$
%%
%PARA LO QUE TU NECESITAS ABAJO
%  }

\begin{theorem}\label{theorem-dilataciones} Fix $2\geq s>1$ and $1<p<\infty$.
For a weight $w$ in $C_p$ and $\delta = \frac{1}{A_{t,p}\max(1,[w]_{C_p,s})}$ and every cube $Q$, with $$A_{s,p}=\frac{5^n 2^{1+5n}}{1-s^{-n(p-1)}},$$ we have
\begin{equation}\label{equationcondilataciones}\left( \frac{1}{|Q|} \int_Q w^{1+\delta}\right)^\frac{1}{1+\delta} \leq ({2^n+1})\ a_{C_p,s}(s Q).\end{equation}
\end{theorem}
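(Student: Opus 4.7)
The proof generalizes the argument of Section 3 by replacing the dilation factor $2$ with the parameter $s \in (1, 2]$ throughout. The structure is the same two-step strategy used to establish Theorem \ref{RHI-Cp-Theorem}: first prove a quantitative estimate for $M(\chi_{\strt{1.5ex}Q} w)$ under an abstract hypothesis analogous to \eqref{HIPOTESIS-CP}, and then combine it with a Lebesgue differentiation and Calder\'on--Zygmund decomposition argument to pass from $M(\chi_{\strt{1.5ex}Q} w)$ to $w$ itself.

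The first step is an $s$-adapted version of Proposition \ref{porpositioncopiada}: if a weight satisfies $\avgint_Q M(\chi_{\strt{1.5ex}Q} w) \leq \gamma\, a_{C_p,s}(Q) < \infty$ for every cube $Q$, then for $\delta \leq 1/(A'_{s,p} \max(1,\gamma))$ one has
$$\avgint_Q M(\chi_{\strt{1.5ex}Q} w)^{1+\delta} \leq c(n,p,s)\, \gamma\, a_{C_p,s}(sQ)^{1+\delta}.$$
I would repeat the five-step argument of Proposition \ref{porpositioncopiada} verbatim, modified as follows. The auxiliary maximal operator becomes $\tilde M v(x) = \sup_{k \in \Z} \avgint_{Q(x, s^k(\rho - r))}|v|$, with pointwise comparison constant at most $(2s)^n$. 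The stopping threshold in Step 2 is $\la_0 = s^{n(l+1)}\, a_{C_p,s}(sQ)$, where $l \geq 0$ is chosen so that $s^l(\rho - r) = R$; this forces $\rho \leq sR$ and hence $Q_\rho \subset sQ$. The geometric series $\alpha$ is replaced by $\alpha_s = (1 - s^{-n(p-1)})^{-1}$, which is the origin of the $1 - s^{-n(p-1)}$ denominator in $A_{s,p}$. The iteration scheme in Step 5 is adjusted to run from $t_0 = R$ to $t_\infty = sR$ with step lengths of the form $R\, s^{-l_i}$ and $l_i \nearrow \infty$, so that the telescoping remains geometric and converges once $\delta$ is small enough to make the multiplicative factor lie below $1/2$.

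With the modified proposition in hand, the proof of Theorem \ref{theorem-dilataciones} copies the end of the proof of Theorem \ref{RHI-Cp-Theorem}: one starts from $\avgint_Q w^{1+\delta} \leq \avgint_Q (M_{d,Q} w)^\delta w$, applies a Calder\'on--Zygmund decomposition of $w$ at heights $\la \geq w_Q$, and splits the resulting layer-cake integral into $w_Q^{1+\delta}$ plus a term controlled by $\avgint_Q (M_{d,Q} w)^{1+\delta}$. The first summand is bounded using $w_Q \leq s^n\, a_{C_p,s}(sQ) \leq 2^n\, a_{C_p,s}(sQ)$ (since $s \leq 2$), which accounts for the $2^n$ part of the constant $2^n + 1$. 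The second summand is handled via the modified Proposition with $\gamma \leq [w]_{C_p,s}$, and the choice $\delta = 1/(A_{s,p}\max(1,[w]_{C_p,s}))$ closes the bootstrap and contributes the remaining summand~$1$.

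The main obstacle is bookkeeping: carefully tracking the $s$-, $n$-, and $p$-dependence through all five steps of the modified Proposition, and verifying that the Vitali covering and the iteration remain uniformly valid for $s \in (1, 2]$, ultimately matching the stated value $A_{s,p} = 5^n\, 2^{1+5n}/(1 - s^{-n(p-1)})$. In particular, the replacement $\alpha \to \alpha_s$ and the non-integer character of the natural choice of $l$ in the $s$-scale require some care, but no conceptually new idea is required beyond a faithful parameterization of the method of Section 3 by $s$.
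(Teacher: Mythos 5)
Your proposal matches the paper's proof essentially step by step: both adapt the five-step argument of Proposition \ref{porpositioncopiada} by replacing the dilation factor $2$ with $s$ throughout (the $s$-adic auxiliary maximal operator, threshold $\la_0 = s^{n(l+1)} a_{C_p,s}(sQ)$, geometric series $\alpha_s=(1-s^{-n(p-1)})^{-1}$, iteration from $R$ toward $sR$), crucially keeping $a_{C_p,s}(sQ)$ on the right rather than absorbing it into $a_{C_p,s}(Q)$, and then close via the Calder\'on--Zygmund decomposition argument and the bound $w_Q\le s^n a_{C_p,s}(sQ)\le 2^n a_{C_p,s}(sQ)$. The one point you rightly flag as delicate — that the naive step lengths $s^{-(i+1)}R$ sum to $\frac{R}{s-1}>(s-1)R$ for $s<2$, so the exponents $l_i$ must be spaced out — is in fact glossed over in the paper as well, but it only perturbs the constant by a harmless factor.
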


Before we prove this theorem, we give a proof of Theorem \ref{SHARP-RHI-Theorem} as a corollary. Let $w\in A_\infty$. By Remark \ref{REMARKCP}, we can let $p\rightarrow \infty$ in equation \eqref{equationcondilataciones} and we obtain
\begin{equation}\label{atomarelimite}\left( \frac{1}{|Q|} \int_Q w^{1+\delta_\infty}\right)^\frac{1}{1+\delta_\infty} \leq ({2^n+1})\ w_{s Q},
\end{equation}
where 
$$\delta_\infty = \lim_{p\rightarrow \infty} \frac{1-s^{-n(p-1)}}{c_n \max(1,[w]_{C_p,s})} = \frac1{c_n [w]_{A_\infty}}.$$
Now we let $s \rightarrow 1$ in \eqref{atomarelimite} and obtain $$\left( \frac{1}{|Q|} \int_Q w^{1+\delta_\infty}\right)^\frac{1}{1+\delta_\infty} \leq ({2^n+1})\ w_{ Q},$$ which is in fact the reverse H\"older inequality for $A_\infty$ weights.

\begin{remark} The dimensional constants are bigger from those in Theorem \ref{SHARP-RHI-Theorem}, but the dependence on the weight is essentially the same. Because of this, we obtain that the dependence on $w$ in Theorem \ref{RHI-Cp-Theorem} is sharp.\end{remark}

\begin{proof}[Proof of Theorem \ref{theorem-dilataciones}]
We repeat the first three steps of the proof of Proposition \ref{porpositioncopiada}, with the following modifications. This time, $r,\rho,l$ will satisfy $s^l(\rho-r)=R$ and $R\leq r<\rho\leq R$. Also, now we will use the maximal operator $\tilde Mv(x) = \sup_{k\in \Z}\avgint_{Q(x,s^k(\rho-r))}u,$ and some other trivial changes. For the fourth step, we leave $a_{C_p,s}(s Q)$ in the equation, so we get
$$\FI(r)  \leq s^{n(\delta+1)}\gamma |Q|s^{n\delta l}\left(a_{C_p,s}(s Q)\right)^{1+\delta} + ( \kappa^{1+\delta} (5s^2)^n \gamma \alpha_s) \ \delta \ \FI(\rho),
$$
where $\alpha_s = \sum_{k\geq0}s^{-nk(p-1)}=(1-s^{-n(p-1)})^{-1}.$
We make a similiar iteration scheme, namely $t_0=R$ and $t_{i+1}=t_i+s^{-(i+1)}R \leq s R.$ Now the condition for $\delta$ translates to $\delta \leq \frac 1{A_{s,p}\max(1,\gamma)}$ where $$A_{s,p}=\frac{5^{n} 2^{1+5n}}{1-s^-{n(p-1)}}.$$ 
The main difference is that now we get
\begin{equation*}\frac{1}{|Q|}\int_Q (M(\chi_{\strt{1.5ex}Q}w)_m)^{1+\delta} \leq 2^{1+5n} \gamma \big(a_{C_p,s}(s Q)\big)^{1+\delta},\end{equation*}
where the right part stays bounded whenever $p\rightarrow \infty$. Now we use Fatou lemma and make $m\rightarrow \infty$ to get
\begin{equation}\label{equationextrana}\frac{1}{|Q|}\int_Q M(\chi_{\strt{1.5ex}Q}w)^{1+\delta} \leq 2^{1+5n} \gamma \big(a_{C_p,s}(s Q)\big)^{1+\delta}.\end{equation}

Now we make the argument in the proof of Theorem \ref{RHI-Cp-Theorem} and combine it with \eqref{equationextrana}. We get, 
\begin{align*}
\avgint_Q w^{1+\delta} & \leq (w_Q)^{1+\delta}+2^n \frac \delta{1+\delta} \frac 1{|Q|}\int_Q(M_{d,Q}w)^{1+\delta}\\
& \leq (w_Q)^{1+\delta}+2^n \frac \delta{1+\delta} 2^{1+5n} \gamma \big(a_{C_p,s}(s Q)\big)^{1+\delta}\\
& \leq (2^n +\delta 2^{1+6n} \gamma) \big(a_{C_p,s}(s Q)\big)^{1+\delta}\\
& \leq (2^n +1) \big(a_{C_p,s}(s Q)\big)^{1+\delta},
\end{align*}
whenever $\delta \leq \frac1{2^{1+6n}\gamma}$, which is true by the choice of $\delta$. This finishes the proof.
\end{proof}

\section{$C_p$ weights and the Coifman--Fefferman inequality}

Let $T^*$ denote a maximally truncated Calder\'on--Zygmund operator and $M$ the Hardy--Littlewood maximal operator. Then for $w\in A_\infty$ and any $f\in L^\infty_c$, we have for any $0<p<\infty$
\begin{equation}\label{Ecuacionconpesoclasica}
\| T^* f\|_{L^p(w)} \leq C \: \|Mf\|_{L^p(w)},
\end{equation}
where the constant depends only on $w$, $T$ and $p$.

The classical proof of inequality \eqref{Ecuacionconpesoclasica} in \cite{CoifmanFeffermanpaper} uses a good-$\la$ inequality between the operators $T^*$ and $M$. If the kernel of $T$ is not regular enough, there is in general no good-$\la$ inequality and even inequality \eqref{Ecuacionconpesoclasica} can be false, as is shown in \cite{MPTG}. 

There are ways of proving inequality \eqref{Ecuacionconpesoclasica} without using the good-$\la$ inequality. For example, the proof given in \cite{AP} uses a pointwise estimate involving the sharp maximal function. Another proof can be found in \cite{CMP}, where the main tool is an extrapolation result that allows to obtain estimates like \eqref{Ecuacionconpesoclasica} for any $A_\infty$ weight from the smaller class $A_1$ (see also \cite{CGMP}).

Inequality \eqref{Ecuacionconpesoclasica} is very important in the classical theory of Calder\'on--Zygmund operators, as it is used in the proof of many other weighted norm inequalities. The first, and probably most important consequence of \eqref{Ecuacionconpesoclasica} is the boundedness of $T^*$ in $L^p(w)$ for any weight $w\in A_p$, $1<p<\infty$, namely
$$\int_{\R^n} (T^*f)^p w \leq c \int_{\R^n} |f|^p w.$$ This comes as a direct corollary of Muckenhoupt's theorem \cite{Muck}. 

Another consequence of inequality \eqref{Ecuacionconpesoclasica}, though not as direct as the previous one, is the following inequality, obtained in \cite{P0}. For any weight $w$ it holds
$$\norm{T^*f}_{L^p(w)} \leq c \norm f_{L^p(M^{[p]+1}w)},$$
where $[p]$ denotes the integer part of $p$ and $M^k$ denotes the $k-$fold composition of $M$. This result is sharp since $[p]+1$ cannot be replaced by $[p]+1$. This is saying that inequality \eqref{Ecuacionconpesoclasica} encodes a lot of information.  Very recently, this result was extended in \cite{LPRR} to the non-smooth case kernels, more precisely to the case case of rough singular operators  $T_{\Omega}$ with $\Omega \in L^{\infty}(\mathbb{S}^{n-1})$, by proving inequality \eqref{Ecuacionconpesoclasica} for these operators.   The proof of this result is quite different from the classical situation since there is no good-$\lambda$ estimate involving these operators and it is a consequence of a sparse domination result for $T_{\Omega}$ obtained in \cite{CACDPO} combined with the $A_{\infty}$ extrapolation theorem mentioned above in \cite{CMP}.

%Many authors have given appropriate versions of inequality
Norm inequalities similar to \eqref{Ecuacionconpesoclasica} are true for other operators, for instance in \cite{MW} (fractional integrals) or \cite{Wilson} (square functions).
%Inequality \eqref{Ecuacionconpesoclasica} has been generalised to different settings. 
Also, in the context of multilinear  harmonic analysis one can find other examples, for example, it was shown in \cite{LOPTT} an analogue for multilinear Calder\'on--Zygmund operators $T$, namely
$$\norm{T(f_1,...,f_m)}_{L^p(w)} \leq c \norm{\mathcal M(f_1,...,f_m)}_{L^p(w)},$$
for $w\in A_\infty$ extending \eqref{Ecuacionconpesoclasica}. We refer to \cite{LOPTT} for the definition of the operator $\mathcal M$. The proof for the multilinear setting is in the spirit of the proof of inequality \eqref{Ecuacionconpesoclasica} given in  \cite{AP}. There are also inequalities for \eqref{Ecuacionconpesoclasica} for more singular operators like the case of  commutators of Calder\'on--Zygmund operators with BMO functions, as was proved in \cite{P1}. In this case, the result is, for $w\in A_\infty,$ $$\norm{[b,T]f}_{L^p(w)}\leq c \norm{M^2f}_{L^p(w)},$$ where $[b,T]f = bTf-T(bf)$ and $M^2=M\circ M$. The result is false for $M$, because the commutator is not of weak type (1,1) and it would then contradict the extrapolation result from \cite{CMP}. 
 
All of the inequalities mentioned above are true for the class $A_\infty$ of weights, but $A_\infty$ is not the whole picture for some of them. The correct class of weights is, in some sense, the $C_p$ class. Muckenhoupt showed in \cite{Muckenhoupt1981} that $A_\infty$ is not necessary for the CFI \eqref{Ecuacionconpesoclasica}, and that the correct necesary condition is $C_p$. About sufficiency, Sawyer \cite{Sawyer1} proved that $w\in C_{p+\eta}$ for some $\eta>0$ is sufficient for \eqref{Ecuacionconpesoclasica} in the range $p\in(1,\infty)$. It is still an open conjecture if $C_p$ is a sufficient condition.

Although $C_p$ weights were introduced in the context of the CFI, other inequalities have been proved to hold for these weights. For example, the Fefferman--Stein inequality, between the maximal operators of Hardy--Littlewood and of Fefferman--Stein, as can be found in \cite{Yabuta}, \cite{CP} for a quantified version, \cite{LernerCP} in the weak-type context. In \cite{CLPR}, the authors extended Sawyer's result to a wider class of operators than Calder\'on--Zygmund operators, including some pseudo-differential operators and oscillatory integrals. Finally, in \cite{CLRT}, Sawyer's result was extended to rough singular integrals and sparse forms.

The rest of this Section is devoted to the quantification of Sawyer's result.
We define now the Calder\'on--Zygmund operators in a similar way as in \cite{CoifmanFeffermanpaper}. We will need a kernel $K$ defined away from the diagonal $x=y$ of $(\R^n)^{2}$ that satisfies the size condition
$$|K(x,y)|\leq \frac{A}{|x-y|^{n}}$$
for some $A>0$ and every $x\neq y$. Furthermore, we require the following regularity conditions for some $\eps>0$
\begin{align*}
|K(x,y)-K(x',y)|&\leq A\frac{|x-x'|^\eps}{|x-y|^{n+\eps}}
\end{align*}
whenever $2|x-x'|\leq |x-y|$, and the symmetric condition
\begin{align*}
|K(x,y)-K(x,y')|&\leq A\frac{|y-y'|^\eps}{|x-y|^{n+\eps}}
\end{align*}
whenever  $2|y-y'|\leq |x-y|$.

A Calder\'on--Zygmund operator associated to a kernel $K$ satisfying the above conditions is a linear operator $T:\mathscr S (\R^n) \longrightarrow \mathscr S'(\R^n)$ that satisfies 
$$Tf(x) = \int_{\R^n}K(x,y)f(y)dy,$$
for $f \in C_c^\infty(\R^n)$ and $x\not \in \text{supp}(f)$. Additionally, we will require that $T$ is bounded in $L^2$.

Now we define the maximal truncated singular integral operator $T^*$ as follows 
$$T^*f(x)=\sup_{\eps>0} \left| \int_{|x-y|>\eps} K(x,y)f(y)dy\right|.$$

We state the  quantification of Theorem B from \cite{Sawyer1} and Theorem 16 from \cite{CLPR}.

\begin{theorem}\label{norminequalitythm}
Fix $q>p>1$. For all Calder\'on--Zygmund operator $T$, all bounded $f$ with compact support and all weights $w\in C_q$ we have
$$\norm{T^*f}_{L^p(w)}\leq  c_{n,T}\ (q+\frac{qp^2}{q-p}) \ \Phi( \max([w]_{C_p},1)) \norm{ M  f}_{L^p(w)},$$
where $\Phi(t) =t\log(e+ t).$
\end{theorem}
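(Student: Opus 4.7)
The plan is to quantify Sawyer's proof in \cite{Sawyer1} by substituting two sharper ingredients: the quantitative $C_p$ reverse Hölder inequality (Theorem \ref{RHI-Cp-Theorem}), and Buckley's \cite{Buckley1993} good-$\lambda$ inequality between $T^*$ and $M$ with exponential (rather than Coifman--Fefferman-linear) decay. After reducing to $\|T^*f\|_{L^p(w)}<\infty$ by a standard truncation (justified since $f\in L^\infty_c$ and $w\in C_q\subseteq C_p$), I begin with the layer-cake identity
\[
\|T^*f\|_{L^p(w)}^{p}=p\,2^{p}\!\int_{0}^{\infty}\!\lambda^{p-1}\,w\!\left(\{T^*f>2\lambda\}\right)d\lambda,
\]
and Whitney-decompose $\Omega_\lambda:=\{T^*f>\lambda\}$ into maximal dyadic cubes $\{Q_j\}$. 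For each $Q_j$ I split
\[
\{T^*f>2\lambda\}\cap Q_j\ \subset\ F_j\cup\{Mf>\gamma\lambda\},\qquad F_j:=Q_j\cap\{T^*f>2\lambda,\ Mf\le\gamma\lambda\},
\]
with a parameter $\gamma>0$ to be chosen at the end; Buckley's local good-$\lambda$ inequality yields $|F_j|\le C e^{-c/\gamma}|Q_j|$.

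Next I bring in the weight. Since $[w]_{C_p}\le[w]_{C_q}<\infty$, Theorem \ref{RHI-Cp-Theorem} (through the quantification of $\varepsilon$ in the remark after it) gives the $C_p$ defining inequality \eqref{Cp-definition} with $\varepsilon\sim 1/\max(1,[w]_{C_p})$. Applied to the set $F_j\subset Q_j$, this yields
\[
w(F_j)\ \le\ C\Big(\tfrac{|F_j|}{|Q_j|}\Big)^{\!\varepsilon}\!\int_{\R^n}(M\chi_{\strt{1.5ex}Q_j})^{p}w\ \le\ C\,e^{-c\varepsilon/\gamma}\!\int_{\R^n}(M\chi_{\strt{1.5ex}Q_j})^{p}w.
\]
Summing in $j$ and integrating against $\lambda^{p-1}\,d\lambda$ is the crux: the $C_p$-tails $\int(M\chi_{\strt{1.5ex}Q_j})^{p}w$ are non-local, so the sum cannot be controlled by a naive localization as for $A_\infty$. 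The Whitney property (a fixed dilate of each $Q_j$ meets $\Omega_\lambda^c$) lets me dominate each tail by an integral of $w$ against a geometric series of dilates of $Q_j$, and the gap $q-p>0$ provides the summable decay through bounded-overlap arguments; this is where the factor $q+qp^2/(q-p)$ originates. After this reorganization, a routine absorption using $\|T^*f\|_{L^p(w)}<\infty$ leads to
\[
\|T^*f\|_{L^p(w)}^{p}\ \lesssim_{n,T,p,q}\ \big(\gamma^{-p}+[w]_{C_q}\,e^{-c\varepsilon/\gamma}\big)\,\|Mf\|_{L^p(w)}^{p}.
\]

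The logarithm now appears via optimization in $\gamma$: choosing $\gamma\sim \varepsilon/\log(e+[w]_{C_q})$, the exponential factor becomes $\lesssim (e+[w]_{C_q})^{-1}$, while $\gamma^{-1}\sim\max(1,[w]_{C_p})\log(e+[w]_{C_q})$, giving the claimed $\Phi(\max([w]_{C_p},1))$ after a $p$-th root. The main obstacle is the summation/reorganization step: unlike in the $A_\infty$ setting, the $C_p$-tail cannot be replaced by a purely local weighted mean, so one must exploit both the Whitney geometry and the strict inequality $q>p$ to convert $\sum_j\int(M\chi_{\strt{1.5ex}Q_j})^{p}w$ into a global $L^p(w)$ integral of $Mf$; getting the explicit dependence $(q+qp^2/(q-p))$ on the gap, while simultaneously retaining the sharpness of Buckley's exponential good-$\lambda$, is exactly what forces the logarithmic (rather than purely linear) correction in the final bound.
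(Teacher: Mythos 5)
Your proposal correctly identifies the two quantitative ingredients (the $C_q$ reverse H\"older inequality and Buckley's exponential good-$\lambda$ estimate), correctly anticipates the final optimization in $\gamma$ that produces the logarithm, and correctly flags the summation-over-Whitney-cubes step as ``the crux''. This is, up to that point, the same route the paper takes. However, the proposal then \emph{asserts} the crucial estimate rather than proving it, and this is where a genuine gap lies.

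The passage ``the Whitney property \dots lets me dominate each tail by an integral of $w$ against a geometric series of dilates of $Q_j$, and the gap $q-p>0$ provides the summable decay through bounded-overlap arguments'' is not a proof but a description of what must be proved. In the paper this step is precisely the content of Definition~\ref{Marcinkiewiczintegral}, Lemma~\ref{Lemma4Sawyer} and Lemma~\ref{Lemma2Sawyer}: one introduces the Marcinkiewicz-type operator
\[
M_{p,q}h(x)^p=\sum_{k\in\Z}\ \sum_{Q\in\mathcal W(k)}2^{kp}\bigl(M\chi_{\strt{1.5ex}Q}(x)\bigr)^q,
\]
with $\mathcal W(k)$ the Whitney decomposition of $\{h>2^k\}$, and one proves by an iteration on the partial sums $S(k)$ that $\int(M_{p,q}Mf)^p w$ is dominated by $\int(Mf)^pw$ (Lemma~\ref{Lemma4Sawyer}, which is exactly where $q>p$ is used and where the factor $q+\frac{qp^2}{q-p}$ is generated), and that $\int(M_{p,q}T^*f)^pw$ is dominated by $\int(T^*f)^pw+\int(Mf)^pw$ (Lemma~\ref{Lemma2Sawyer}, via a case analysis on Whitney cubes and the Coifman--Fefferman local good-$\lambda$ estimate \eqref{goodlambdamala}). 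Your proposal skips all of this: after invoking Buckley and the $C_q$ condition, you go directly to $\|T^*f\|_{L^p(w)}^p\lesssim\bigl(\gamma^{-p}+[w]_{C_q}e^{-c\varepsilon/\gamma}\bigr)\|Mf\|_{L^p(w)}^p$, but the term that actually appears after summing in $j$ and integrating in $\lambda$ is $\int(M_{p,q}T^*f)^pw$, which still contains $T^*f$ on the right and must be disentangled via Lemma~\ref{Lemma2Sawyer} before any absorption. You say the absorption is ``routine'', but the bulk of Sawyer's argument is precisely making that step rigorous.

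Two smaller points. First, the tail exponent in $w(F_j)\le C\,e^{-c\varepsilon/\gamma}\int(M\chi_{\strt{1.5ex}Q_j})^pw$ should be $q$, not $p$: the weight satisfies the $C_q$ condition, so the tails one obtains (and the ones that feed into $M_{p,q}$) are $q$-tails; the gap $q>p$ between the tail exponent and the Lebesgue exponent is what makes the geometric-series dominance in Lemma~\ref{Lemma4Sawyer} summable. Writing $p$ there collapses the very mechanism you later appeal to. Second, the coefficient $[w]_{C_q}$ multiplying $e^{-c\varepsilon/\gamma}$ is not quite what comes out of Lemma~\ref{Lemma2Sawyer} (the actual coefficient is of order $\varepsilon^{-2}\log\varepsilon^{-1}$ with $\varepsilon^{-1}\sim\max(1,[w]_{C_q})$); this happens not to affect the final optimization since only $\log$ of the coefficient enters, but it signals that the intermediate inequality was guessed rather than derived.

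In summary: the roadmap is the right one and matches the paper, but the central technical content — Sawyer's partial-sum iteration with the operator $M_{p,q}$ and the two lemmas controlling it, which is where $q>p$ is exploited and where the constant $q+\frac{qp^2}{q-p}$ actually comes from — is left unproved. Without that, the proposal is an outline, not a proof.
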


We begin with a few lemmas, which correspond to Lemmas 2-4 in \cite{Sawyer1}. We include most of the details concerning the quantification of the weight for the sake of completion.

\begin{lemma}\label{Lemma3Sawyer}
Let $w\in C_q$. Fix $R\geq 2$ and $\delta>0$. Then for every cube $Q$ and any collection of pairwise disjoint cubes $Q_j\subset Q$ we have 
\begin{equation}\label{eq14-Sawyer}
\int_{RQ}\sum_j (M\chi_{\strt{1.5ex}Q_j}(x))^pw(x)dx \leq \frac 1{a\eps}\log \frac{c R^{nq}}{\eps \delta} w(RQ) + \delta \int_{\R^n} M\chi_{\strt{1.5ex}Q}(x)^q w(x)dx,
\end{equation}
where $a,c$ are dimensional constants and $\eps$ is the parameter for $w$ in \eqref{Cp-definition}. Hence, we have
\begin{equation}\label{eq15-Sawyer}
\int \sum_j (M\chi_{\strt{1.5ex}Q_j}(x))^qw(x)dx \leq c_n  4^{nq}\frac 1\eps \int_{\R^n} (M\chi_{\strt{1.5ex}Q}(x))^qw(x)dx.
\end{equation}
\end{lemma}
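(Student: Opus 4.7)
The plan is to follow the strategy of Sawyer's Lemma 3 in \cite{Sawyer1}, whose main ingredients are the pointwise annular decay of $M\chi_{Q_j}$ combined with the $C_q$ condition \eqref{Cp-definition} applied at two different scales.

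I begin from the pointwise estimate
\[
(M\chi_{Q_j}(x))^p\leq\chi_{Q_j}(x)+C_n\sum_{k\geq 1}2^{-nkp}\chi_{2^{k+1}Q_j\setminus 2^kQ_j}(x),
\]
sum over $j$, and integrate against $w$ on $RQ$. The $k=0$ contribution collapses to $w(\cup_jQ_j)\leq w(Q)\leq w(RQ)$ by disjointness, so the task reduces to estimating $I_k:=\sum_jw\bigl((2^{k+1}Q_j\setminus 2^kQ_j)\cap RQ\bigr)$ for $k\geq 1$.

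For each $k$ I apply the $C_q$ hypothesis to the set $E_k:=\bigcup_j(2^{k+1}Q_j\setminus 2^kQ_j)\cap RQ\subset RQ$ with enclosing cube $RQ$, using the disjointness bound $|E_k|\leq\sum_j|2^{k+1}Q_j|\leq 2^{n(k+1)}|Q|$ and the comparison $M\chi_{RQ}\leq R^nM\chi_Q$, obtaining
\[
w(E_k)\leq C\Bigl(\frac{|E_k|}{|RQ|}\Bigr)^{\!\eps}\!\int(M\chi_{RQ})^qw\leq CR^{n(q-\eps)}2^{n\eps k}\!\int_{\R^n}(M\chi_Q)^qw.
\]
The contributions to $\sum_{k\geq 1}2^{-nkp}w(E_k)$ then form a geometric series in $2^{-n(p-\eps)}$. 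Splitting this sum at a threshold $k_0$ (using the trivial bound $w(E_k)\leq w(RQ)$ for $k\leq k_0$ and the $C_q$ bound above for $k>k_0$) and choosing $k_0$ of the order of $\frac{1}{n\eps}\log(R^{nq}/(\eps\delta))$ balances the two regimes and produces both the logarithmic factor in the local term and the $\delta$-factor in the tail term, yielding \eqref{eq14-Sawyer}.

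For the consequence \eqref{eq15-Sawyer}, specialize \eqref{eq14-Sawyer} to $p=q$ and $R=2$, convert $w(2Q)$ into a tail term via $M\chi_Q\geq 2^{-n}$ on $2Q$ (so $w(2Q)\leq 2^{nq}\int(M\chi_Q)^qw$), handle the integral outside $2Q$ via the pointwise estimate $\sum_j(M\chi_{Q_j})^q\lesssim (M\chi_Q)^q$ on $\R^n\setminus 2Q$ (since the $Q_j$ are disjoint subcubes of $Q$, their maximal functions are dominated by $M\chi_Q$ in the far field), and take $\delta$ a fixed constant multiple of $\eps$ to absorb the tail into the final constant $c_n4^{nq}/\eps$. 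The main obstacle is the precise bookkeeping that produces the $1/\eps$ factor in front of the logarithm; in Sawyer's argument this arises from an interpolation step that converts the natural decay exponent $p-\eps$ into $\eps$, and must be tracked carefully to match the stated form of the constant.
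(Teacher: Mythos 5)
Your approach is genuinely different from the paper's, and unfortunately it has a gap at its core. The paper's proof works with the distribution function of the whole sum $g=\sum_j (M\chi_{Q_j})^q$ and feeds in the Fefferman--Stein exponential integrability estimate
\[
|\{x\in RQ: g(x)>\lambda\}|\le c_n e^{-a\lambda}|RQ|,
\]
which is a nontrivial input valid precisely because the $Q_j$ are pairwise disjoint. Applying the $C_q$ condition to the super-level set $E_\lambda$ and integrating over $\lambda$ then gives exactly $\lambda\, w(RQ)$ from the range $t<\lambda$ and an exponentially small tail from $t>\lambda$; choosing $\lambda\sim\frac{1}{a\eps}\log\frac{cR^{nq}}{\eps\delta}$ produces the log.

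Your pointwise annular decomposition runs into a multiplicity problem that you do not address. After summing over $j$ you must control
\[
I_k=\sum_j w\bigl((2^{k+1}Q_j\setminus 2^kQ_j)\cap RQ\bigr),
\]
but you then apply the $C_q$ condition to the \emph{union} $E_k=\bigcup_j(2^{k+1}Q_j\setminus 2^kQ_j)\cap RQ$, which only controls $w(E_k)$. These differ by the overlap multiplicity of the dilated annuli $2^{k+1}Q_j\setminus 2^kQ_j$, and this multiplicity is \emph{not} bounded: for a fixed $k$ and a fixed point $x$, one can arrange arbitrarily many disjoint subcubes $Q_j\subset Q$ (of suitably graded sizes) whose $k$-th annuli all contain $x$. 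It is exactly this accumulation that makes $\sum_j(M\chi_{Q_j})^q$ an unbounded function (only in $\exp L$), and it is what the Fefferman--Stein estimate is designed to tame. Without it, the step $I_k\le w(E_k)$ is simply false.

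There is also an internal inconsistency in the sketch: with the (incorrect) bound $I_k\le w(RQ)$ for $k\le k_0$, the local piece $\sum_{k\le k_0}2^{-nkp}w(RQ)$ is a convergent geometric series bounded by a constant independent of $k_0$ — no logarithm appears. The claim that the split ``produces both the logarithmic factor in the local term and the $\delta$-factor in the tail term'' does not follow from the computation you describe; the log genuinely comes from the $\lambda\,w(RQ)$ term in the distribution-function argument, not from a dyadic-in-$k$ split. Finally, a smaller point on \eqref{eq15-Sawyer}: the paper chooses $\delta=1/\eps$ (not a constant multiple of $\eps$) so that $\frac{cR^{nq}}{\eps\delta}$ becomes a pure dimensional constant and the logarithm contributes nothing; your choice $\delta\sim\eps$ would leave an extra factor of $\log(1/\eps)$ in the final bound.
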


\begin{proof}
For $\la>0$, we will call $E_\la = \{x \in RQ: \sum_j M\chi_{\strt{1.5ex}Q_j}(x)^q>\la\}.$ Since the cubes are pairwise disjoint, we have $\sum_j \chi_{\strt{1.5ex}Q_j}\in L^\infty.$ Then by the exponential inequality from \cite{FeffermanStein} we have $|E_\la|\leq c_n e^{-a\la}|RQ|,$ where $c_n$ and $a$ are positive dimensional constants. Then, applying the $C_q$ condition \eqref{Cp-definition} we get 
\begin{align*}
w(E_\la) &\leq 2 \left(\frac{|E_\la|}{|RQ|}\right)^\eps \int_{\R^n} (M\chi_{\strt{1.5ex}RQ}(x))^qw(x)dx\\
& \leq c_n e^{-\eps a \la} R^{nq}\int_{\R^n} (M\chi_{\strt{1.5ex}Q}(x))^q w(x)dx.
\end{align*}
Now we compute
\begin{align*}
\int_{RQ} \sum_j (M\chi_{\strt{1.5ex}Q_j}(x))^qw(x)dx & = \int_0^\infty w(E_t) dt= \la w(E_\la) + \int_\la^\infty w(E_t)dt \\
& \leq \la w(RQ) + c_n R^{qn} \frac{1}{a\eps} e^{-a\eps\la} \int_{\R^n} (M\chi_{\strt{1.5ex}Q}(x))^qw(x)dx.
\end{align*}
We can choose $\la$ big enough so that 
$$c_n R^{qn} \frac{1}{a\eps} e^{-a\eps\la} \leq \delta,$$
and we get \eqref{eq14-Sawyer}. In order to get \eqref{eq15-Sawyer}, choose $R=2$, $\delta = \frac 1\eps$ and use $\sum M\chi_{\strt{1.5ex}Q_j}^q \leq 2^{nq} M\chi_{\strt{1.5ex}Q}$ almost everywhere outside of $2Q.$
\end{proof}

%We present the Whitney covering lemma given in \cite{Sawyer1}. 

\begin{lemma}[Whitney covering lemma] Given $R\geq 1$, there is $C=C(n,R)$ such that if $\Omega$ is an open subset in $\R^n$, then $\Omega = \cup_j Q_j$ where the $Q_j$ are disjoint cubes satisfying
$$5R \leq \frac{\dist (Q_j,\R^n\setminus \Omega)}{\diam Q_j}\leq 15R,$$
$$\sum_j \chi_{\strt{1.5ex}RQ_j}\leq C\chi_{\strt{1.5ex}Q}.$$
\end{lemma}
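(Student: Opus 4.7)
The plan is to carry out the standard dyadic Whitney construction tuned to the parameter $R$. Let $\mathcal D$ denote the standard dyadic grid of $\R^n$ and consider the family
\[
\mathcal F = \{Q \in \mathcal D : 5R\,\diam Q \leq \dist(Q,\R^n\setminus\Omega)\}.
\]
For each $x \in \Omega$, openness forces every dyadic cube containing $x$ of sufficiently small diameter to lie in $\mathcal F$, while the condition fails as soon as $\diam Q > \dist(x,\R^n\setminus\Omega)/(5R)$; hence there is a unique maximal dyadic cube $Q_x \in \mathcal F$ containing $x$. Let $\{Q_j\}$ be the collection of these maximal cubes. By the nesting property of the dyadic grid the $Q_j$ are pairwise disjoint, and by construction they cover $\Omega$.

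The lower bound $5R \leq \dist(Q_j,\R^n\setminus\Omega)/\diam Q_j$ is built into the definition of $\mathcal F$. For the upper bound I exploit maximality: the dyadic parent $\hat Q_j$ lies outside $\mathcal F$, so $\dist(\hat Q_j,\R^n\setminus\Omega) < 5R\,\diam \hat Q_j = 10R\,\diam Q_j$, and a triangle inequality yields
\[
\dist(Q_j,\R^n\setminus\Omega) \leq \dist(\hat Q_j,\R^n\setminus\Omega) + \diam \hat Q_j < (10R+2)\,\diam Q_j \leq 15R\,\diam Q_j,
\]
using $R \geq 1$ in the last step.

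For the bounded overlap, note that if $x \in RQ_j$ then $\dist(x,Q_j) \leq R\,\diam Q_j$, so the two-sided bound above gives $4R\,\diam Q_j \leq \dist(x,\R^n\setminus\Omega) \leq 16R\,\diam Q_j$. In particular $RQ_j \subset \Omega$, and the side length of any $Q_j$ with $x \in RQ_j$ is pinned to an interval of ratio $4$ determined by $\dist(x,\R^n\setminus\Omega)$ and $R$. Since dyadic side lengths are powers of $2$ only $O(1)$ scales are admissible, and at each such scale at most $(2R+1)^n$ dyadic cubes of that scale have their $R$-dilate containing $x$. Summing yields $\sum_j \chi_{RQ_j} \leq C(n,R)\chi_\Omega$, completing the proof. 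This is a parametric version of a classical construction, so no substantive obstacle is anticipated; the only care needed is in tracking the dependence on $R$ through the selection argument.
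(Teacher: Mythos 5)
Your proof is correct. The paper does not prove this lemma---it states it as a known result imported from Sawyer \cite{Sawyer1}---so there is no in-paper argument to compare against; what you have written is the standard dyadic Whitney construction with the stopping parameter tuned to $5R$, and the computations (the two-sided distance estimate, the $R\geq 1$ absorption to reach $15R$, the pinning of $\diam Q_j$ to an interval of ratio $4$ for $x\in RQ_j$, and the per-scale count of $O((R+1)^n)$ cubes) all check out. One small thing worth flagging: the paper's statement reads $\sum_j\chi_{RQ_j}\leq C\chi_Q$, which is a typo for $\chi_\Omega$; you correctly proved the intended inequality, and your observation that $RQ_j\subset\Omega$ is exactly what makes the right-hand side $\chi_\Omega$ rather than $1$. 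You may also want to state explicitly that ``disjoint'' means pairwise disjoint interiors (adjacent dyadic cubes of the same generation share faces) and that the argument implicitly assumes $\Omega\neq\R^n$ so that the stopping time is finite; both are the standard conventions and do not affect the result.
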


We now define an auxiliary function considered in \cite{Sawyer1}. This operator will be used to intuitively represent the integral of the function $h$ to the power $p$ after we apply the $C_q$ condition.

\begin{definition}\label{Marcinkiewiczintegral} Let $h$ be a positive lower-semicontinuous function on $\R^n$ and $k$ an integer. Let $\mathcal W(k)$ be the Whitney decomposition of the level set $\Omega_k = \{ h(x)>2^k\}$, that is, $\Omega_k = \cup_{Q\in \mathcal W(k)} Q$. We define the function
\begin{equation}\label{marcinkiewicz-defintion}
M_{p,q} h (x) ^p = \sum_{k\in \Z} \sum_{Q\in \mathcal W(k)} 2^{kp} (M{\car Q}(x))^q.
\end{equation}
\end{definition}
We need lower-semicontinuity in this definition to ensure that we can apply Whitney's decomposition theorem. In the practice, we will apply this operator to $Mf$ and to $T^*f$, which are always lower-semicontinuous.

\begin{lemma}\label{Lemma4Sawyer} For a bounded, compactly supported function $ f$ and a weight $w\in C_q$ with $q>p$, we have
\begin{equation}\label{eq00-Sawyer} 
\int_{\R^n} (M_{p,q}M f(x))^p w(x)dx \leq \left(c_n 2^{c_n\frac{pq}{q-p} } \frac 1 {\eps} \log \frac1\eps\right)\int_{\R^n} ( M f(x))^pw(x)dx,
\end{equation}
where $M_{p,q}$ denotes the Marcinkiewicz integral operator as defined in \eqref{marcinkiewicz-defintion}.
\end{lemma}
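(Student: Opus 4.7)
Set $h = Mf$ and $\Omega_k = \{h > 2^k\}$, and expand using the definition of $M_{p,q}$:
\[
\int_{\R^n}(M_{p,q}Mf)^p\, w \;=\; \sum_{k\in\Z} 2^{kp} \sum_{Q\in\mathcal W(k)} \int_{\R^n}(M\chi_Q)^q\, w.
\]
The whole argument reduces to the per-level estimate
\[
S_k \;:=\; \sum_{Q\in\mathcal W(k)} \int_{\R^n}(M\chi_Q)^q\, w \;\le\; C\,w(\Omega_k),
\]
with $C = c_n\, 2^{c_n pq/(q-p)}\,\eps^{-1}\log(1/\eps)$; the lemma then follows from the elementary layer-cake bound $\sum_k 2^{kp} w(\Omega_k) \le c_p \int (Mf)^p w$.

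\textbf{Main step.} To prove the per-level bound I would apply Lemma \ref{Lemma3Sawyer} in the form of \eqref{eq14-Sawyer}. The Whitney cubes $\mathcal W(k)$ are pairwise disjoint and cover $\Omega_k$, but no single cube contains them all; to supply the containing cube required by \eqref{eq14-Sawyer}, I would first partition $\Omega_k$ into the maximal dyadic cubes $\{R_i\}$ contained in $\Omega_k$ (a standard Calder\'on--Zygmund stopping) and group the Whitney subcubes according to their dyadic ancestor $R_i$. Applying \eqref{eq14-Sawyer} inside each $R_i$ and summing over $i$ would yield
\[
\sum_i \int_{R R_i} \sum_{Q\in\mathcal W(k),\,Q\subset R_i} (M\chi_Q)^q\, w \;\le\; \frac{c_n \log(R^{nq}/\eps\delta)}{\eps}\, w\bigl(\textstyle\bigcup_i R R_i\bigr) \;+\; \delta\sum_i \int_{\R^n} (M\chi_{R_i})^q\, w.
\]
Choosing $R$ of bounded dimensional size and $\delta \sim \eps$, the first term is bounded by $c_n \eps^{-1}\log(1/\eps)\, w(\Omega_k)$ via the bounded-overlap property $\sum_i \chi_{R R_i} \lesssim 1$ enjoyed by maximal dyadic stopping cubes.

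\textbf{Absorbing the tail; main obstacle.} The residual term $\delta\sum_i \int (M\chi_{R_i})^q w$ must be absorbed. The plan is to iterate: reapply \eqref{eq14-Sawyer} with $\{R_i\}$ replaced by their own dyadic ancestors, so that the tail shrinks by a factor $\delta$ per iteration while the containing-cube measure grows only by a dimensional factor. After roughly $N \sim pq/(q-p)$ iterations the geometric decay $\delta^N$ overcomes the compounded dimensional growth; this $N$ arises from matching the decay rate $\delta \sim \eps$ against the $R^{nq}$-growth in the logarithmic prefactor, in a way compatible with summing across the exponent $p$ (rather than $q$), which explains both the factor $2^{c_n pq/(q-p)}$ and why the constant blows up as $p\uparrow q$. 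The main obstacle is precisely the bookkeeping of this iteration: one must verify that the bounded-overlap constants do not accumulate catastrophically across iterations, and that the $\log(1/\eps)$ factor is produced only by the first application of \eqref{eq14-Sawyer}, with all subsequent iterations contributing only geometric factors. This dichotomy is reflected exactly in the structure of the final constant $c_n\, 2^{c_n pq/(q-p)}\, \eps^{-1}\log(1/\eps)$.
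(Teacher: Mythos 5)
Your plan collapses at the very first reduction. You claim the lemma reduces to a per-level bound
\[
S_k \;:=\; \sum_{Q\in\mathcal W(k)} \int_{\R^n}(M\chi_{\strt{1.5ex}Q})^q\, w \;\le\; C\,w(\Omega_k),
\]
but no such bound can hold. The quantity $\int(M\chi_{\strt{1.5ex}Q})^q w$ is genuinely non-local: it involves the tail $\int_{(5Q)^c}|Q|^q/|x-x_Q|^{nq}\,w(x)\,dx$, which sees the weight far from $Q$ and in particular far from $\Omega_k$. Taking $w$ concentrated on a thin annulus disjoint from $\Omega_k$ makes $w(\Omega_k)$ arbitrarily small while $\int(M\chi_{\strt{1.5ex}Q})^q w$ stays bounded below, so the per-level estimate fails. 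Your subsequent ``iterate to the dyadic ancestors'' device does not rescue it: each iteration's main term involves $w$ on strictly larger dilates $\bigcup_i R\,R_i^{(j)}$, which are no longer inside $\Omega_k$, so the accumulated main terms cannot be collected back into $C\,w(\Omega_k)$. The non-locality of the $C_p$ tail is precisely what forbids a single-level bound and is the whole difficulty of Sawyer's argument.

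What the paper actually does is set up a recursion \emph{across} level sets, not within one. With $S(k)=2^{kp}\sum_{Q\in\mathcal W(k)}\int(M\chi_{\strt{1.5ex}Q})^q w$, one groups each $Q\in\mathcal W(k)$ by the Whitney cube $P\in\mathcal W(k-N)$ of the \emph{larger} set $\Omega_{k-N}$ that it meets (the nesting $\Omega_k\subset\Omega_{k-N}$ guarantees $Q\subset 5P$ for large $N$). Applying \eqref{eq14-Sawyer} on each $P$ and estimating the far tail $\int_{(10P)^c}$ directly, one obtains $S(k)\le c_n 2^{Np}\eps^{-1}\log(1/\eps)\, 2^{p(k-N)}w(\Omega_{k-N}) + (\eta 2^{Np}+c_n^q 2^{N(p-q)})\,S(k-N)$. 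The residual is $\tfrac12 S(k-N)$ for $N\gtrsim q/(q-p)$ and $\eta 2^{Np}<\tfrac14$, and absorption happens only after summing over all $k\le M$: one then has $S_M\le\tfrac12 S_M + C\sum_k 2^{kp}w(\Omega_k)$. Two further points in your write-up are off: the threshold $N\sim q/(q-p)$ comes from the geometric tail estimate using $|\Omega_k\cap 5P|\le C2^{-N}|P|$ (a good-$\lambda$ type containment for $M$, equation \eqref{ClaimSawyer}), not from iterating Lemma \ref{Lemma3Sawyer}; and there is only one application of \eqref{eq14-Sawyer} per recursion step, not an initial one followed by log-free iterations --- the $\log(1/\eps)$ simply does not compound because the residual carries the factor $\tfrac12$.
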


\begin{proof}
Let $\mathcal W(k)$ be the Whitney decomposition of $\Omega_k = \{ Mf>2^k\} $, for any integer $k$. Let $N$ be a positive integer to be chosen later and fix a cube $P$ from the $k-N$ generation. We have, as in \cite{Sawyer1},
\begin{equation}\label{ClaimSawyer}
|\Omega_k\cap 5 P| \leq C 2^{- N} |P|,
\end{equation}
where $C$ depends only on the dimension $n$. 

 Now define the partial sums 
\[
 S(k) = 2^{kp}\sum_{Q\in \mathcal W(k)} \int_{\R^n} (M\car Q)^q w
 \]
and 
\[
S(k;N,P) = 2^{kp}\sum_{\substack{Q\in\mathcal W(k)\\ Q\cap P\neq \emptyset}}  \int_{\R^n} (M\chi_{\strt{1.5ex}Q})^q w,
\]
where in the last sum $P\in \mathcal W(k-N)$ is fixed. Because of the Whitney decomposition, $Q \cap R \neq \emptyset$ implies $Q \subset 5 P$ for large $N$, so we have 
\begin{align*}
S(k;N,P) & \leq \int_{\R^n} 2^{kp} \sum_{\substack{Q\in \mathcal W(k) \\ Q\subset 5 P}} (M\car R)^q w \\
& = \int_{10 P} + \int_{(10P)^c} \sum_{\substack{Q\in \mathcal W(k) \\ Q\subset 5 P}} (M\car R)^q w  = I + II \quad \text{for large } \ N.
\end{align*}
Now, by \eqref{eq14-Sawyer}, for any $\eta >0$, which  will be chosen  chosen later, and for $R=10$ we get
$$ I \leq 2^{kp}\frac 1{a\eps} \log \frac{c_n 10^{nq}}{\eta \eps} w(10P) + \eta 2^{kp} \int_{\R^n} (M\car P)^qw.$$
Standard estimates for the maximal function of characteristics of cubes show that if $x_P$ is the center of the cube $Q_P$ then
\begin{align*}
II & \leq c_{n}^q 2^{kp} \int_{(10P)^c} \frac{\sum |Q|^q}{|x-x_P|^{nq}} w(x)dx \\
& \leq c_{n}^q 2^{kp} \int_{(10P)^c} \frac{|\Omega_k \cap P|^q}{|x-P|^{nq}} w(x)dx \\
&\leq c_{n}^q 2^{kp} \int_{(10P)^c} \frac{2^{- {qN}}|P|^q}{|x-x_P|^{nq}} w(x)dx \\
& \leq c_{n}^q 2^{ N(p- q)+(k-N)p} \int_{\R^n} (M\car P)^q w,
\end{align*}
where we have used \eqref{ClaimSawyer} on the third inequality. Thus we have, by the Whitney decomposition theorem, for $N$ large,
%{\color{magenta} ABAJO ES DONDE SE USA QUE $q>p$, mencionar aqu\'i}
%
\begin{align*}
S(k) & \leq \sum_{P\in \mathcal W(k-N)} S(k;N,P) \\
& \leq  \frac 1{a\eps} \log \frac{c_n 10^{nq}}{\eta \eps} 2^{kp} \int_{\R^n} \sum_{P\in \mathcal W(k-N)} \big(\car{10P}\big) w + ( \eta 2^{Np} + c_n^q 2^{N(p-q)}) S(k-N) \\
& \leq c_n \frac 1{a\eps} \log \frac{c_n 10^{nq}}{\eta \eps} 2^{kp} w(\Omega_{k-N}) + ( \eta 2^{Np} + c_n^q 2^{N(p-q)}) S(k-N)\\
&= c_n 2^{Np}\frac 1{a\eps} \log \frac{c_n 10^{nq}}{\eta \eps} 2^{p(k-N)} w(\Omega_{k-N}) +( \eta 2^{Np} + c_n^q 2^{N(p-q)}) S(k-N).
\end{align*}
Now, since $q>p$, we can chose $N$ so that $c_{n,q} 2^{N(p-q)}< \frac14$, that is, $N\geq c_n \frac{q}{q-p}$; and $\eta$ so that $\eta 2^{Np}<\frac 14$.
%\begin{align*}
%S(k) & \leq c_n \frac 1{a\eps} \log \frac{c_n 10^{nq}}{\eta \eps} 2^{kp} w(\Omega_{k-N}) + \frac 12 S(k-N)\\
%&\leq c_n 2^{Np}\frac 1{a\eps} \log \frac{c_n 10^{nq}}{\eta \eps} 2^{p(k-N)} w(\Omega_{k-N}) + \frac 12 S(k-N).
%\end{align*}
%%%
%
%
%{\color{magenta} hay que reordenar/reescribir un poquito \'esto porque es confuso aunque est\'a bien. Lo de arriba deber\'ia antes porque el $\frac12$ de previo viene de esto. Una vez fijas el $N$ y el $\eta$ puedes continuar continuar con lo de abajo}
%%
\begin{align*}
S(k) & \leq c_{n} 2^{c_n \frac{pq}{q-p}}\frac1{a\eps} (qc_n + \log \frac{1}{\eps}+ c_n \frac{pq}{q-p}) 2^{p(k-N)}w(\Omega_{k-N}) + \frac 12 S(k-N)\\
& \leq c_n 2^{c_n \frac{qp}{q-p}} \frac 1{\eps}\log \frac1\eps 2^{p(k-N)} w(\Omega_{k-N}) + \frac 12 S(k-N).
\end{align*}Thus, with $S_M = \sum_{k\leq M }S(k)$ we get
$$S_M \leq \frac12 S_M +  c_n 2^{c_n \frac{qp}{q-p}} \frac 1{\eps}\log \frac1\eps\int_{\R^n} (Mf)^pw.$$

Now, exactly as in \cite{Sawyer1}, p. 260,  we have that $S_M<\infty$ and since it is clear that 
$$\sup_M S_M = \int _{\R^n}(M_{p,q}(M f))^p w,$$
we conclude the proof of the lemma.
\end{proof}

\begin{remark}
The important part of the dependence of the constant on the exponents $p$ and $q$ is that the lemma will fail to be true for $p=q$, with this kind of blowup.
\end{remark}

\begin{lemma}\label{Lemma2Sawyer} Under the same assumptions of Theorem \ref{norminequalitythm} we have
\begin{align*}
\int_{\R^n} (M_{p,q} T^* f(x))^p w(x)dx &\leq  \left( c_n 2^p\frac1{a\eps}\log \frac{c_n 10^{nq}2^{p+2}}{\eps} \right) \int_{\R^n} (T^*f(x))^pw(x)dx \\ 
&+\left(c_n^q2^{c_n\frac{p^2q}{q-p} } \frac 1 {\eps^2} \log \frac1\eps\right) \int_{\R^n} ( M f(x))^pw(x)dx.
\end{align*}
\end{lemma}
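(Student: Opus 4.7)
The plan is to run the proof of Lemma \ref{Lemma4Sawyer} with $T^{*}f$ in place of $Mf$ in the choice of level sets, i.e.\ $\Omega_{k} = \{T^{*}f > 2^{k}\}$, $\mathcal{W}(k)$ its Whitney decomposition, and $S(k) = 2^{kp}\sum_{Q \in \mathcal{W}(k)} \int_{\R^n} (M\chi_{Q})^{q} w$. Fix the gap $N$, write $S(k) = \sum_{P \in \mathcal{W}(k-N)} S(k;N,P)$, and split $S(k;N,P) = I + II$ according to whether the outer integration is over $10P$ or its complement. The local piece $I$ is controlled exactly as in Lemma \ref{Lemma4Sawyer} via Lemma \ref{Lemma3Sawyer}, producing $\frac{1}{a\eps}\log\frac{c_n 10^{nq}}{\eta\eps}\, 2^{kp} w(10P) + \eta\, 2^{kp}\int_{\R^n} (M\chi_{P})^{q} w$; the tail piece $II$ then depends on a concentration estimate for $\Omega_{k}$ on $5P$.

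The new difficulty is that the concentration estimate $|\Omega_{k} \cap 5P| \leq c_n 2^{-N}|P|$, automatic when $\Omega_k$ is a level set of $Mf$, fails in general for $T^{*}f$ level sets. To recover it I will invoke Buckley's good-$\la$ inequality with exponential decay from \cite{Buckley1993}: for every cube $P$ and $\gamma > 0$,
\[
\bigl|\{T^{*}f > 2\la,\ Mf \leq \gamma \la\} \cap 5P\bigr| \leq c_n e^{-c/\gamma} |5P|.
\]
Applied at $\la = 2^{k-1}$ together with the trivial inclusion $\Omega_{k} \cap 5P \subset \{Mf > \gamma 2^{k-1}\} \cup \{T^{*}f > 2^{k},\ Mf \leq \gamma 2^{k-1}\}$, and the choice of $\gamma$ small enough that $e^{-c/\gamma} \lesssim 2^{-(q-p)N}$, this yields
\[
|\Omega_{k} \cap 5P| \leq \bigl|\{Mf > \gamma 2^{k-1}\} \cap 5P\bigr| + c_n 2^{-(q-p)N}|P|,
\]
so the tail argument of Lemma \ref{Lemma4Sawyer} goes through with an additional $Mf$-term.

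Assembling all the estimates produces a recursion of the shape
\[
S(k) \leq c_1(\eps,p,q)\, 2^{p(k-N)} w(\Omega_{k-N}) + c_2(\eps,p,q)\, 2^{pk} w\bigl(\{Mf > c\gamma 2^{k-1}\}\bigr) + \tfrac{1}{2} S(k-N),
\]
once $N \simeq c_n q/(q-p)$ and $\eta$ are small enough to make the combined coefficient $\eta 2^{Np} + c_n^q 2^{N(p-q)}$ below $1/2$. Iterating geometrically and summing over $k$ by Cavalieri, the $w(\Omega_{k-N})$ contributions assemble into the first claimed term with coefficient $c_n 2^p \tfrac{1}{a\eps}\log\tfrac{c_n 10^{nq} 2^{p+2}}{\eps}$, while the $w(\{Mf > c\gamma 2^{k-1}\})$ contributions assemble into the second; the extra factor $\eps^{-1}$ (yielding $\eps^{-2}$) reflects an additional invocation of the $C_{q}$ condition when converting the level-set sum into the $L^{p}(w)$-norm of $Mf$, and the extra factor $p$ in the exponent $p^{2}q/(q-p)$ comes from propagating the $\gamma \simeq 1/N$ dependence through this nested Cavalieri summation.

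The main obstacle is the simultaneous calibration of the parameters $N$, $\eta$, and $\gamma$ so that Buckley's exponential decay dominates the $2^{pN}$ growth from the Whitney iteration while leaving the logarithmic constants from Lemma \ref{Lemma3Sawyer} essentially untouched, and the extraction of the precise coefficients, in particular explaining where exactly the extra $\eps^{-1}$ and the quadratic dependence on $p$ in the exponent enter. A subtler issue is that Buckley's good-$\la$ inequality is classically stated either for Lebesgue measure or for $A_{\infty}$ weights, whereas here $w \in C_{q}$; using the Lebesgue-measure version (which holds unconditionally) and inserting the $C_{q}$ condition only afterwards, via \eqref{Cp-definition} and Lemma \ref{Lemma3Sawyer}, should close the gap.
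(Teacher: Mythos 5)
Your setup (level sets of $T^{*}f$, Whitney decomposition, $S(k)$, local/tail split) matches the paper, but the central step is different and has a gap. The paper does not try to establish a single concentration estimate for $|\Omega_k \cap 5P|$: it first splits the cubes $P \in \mathcal W(k-1)$ into two types. In case (b), $5P \not\subset \{Mf > 2^{k-N}\}$, and then the Coifman--Fefferman good-$\la$ with \emph{linear} decay \eqref{goodlambdamala} (not Buckley's exponential decay, which the paper reserves for the final absorption step in Theorem \ref{norminequalitythm}) gives the concentration $\sum_{Q\in\mathcal W(k),\,Q\subset 5P}|Q|\leq c_T 2^{-N}|P|$, and the tail argument of Lemma \ref{Lemma4Sawyer} applies. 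In case (a), $5P \subset \{Mf > 2^{k-N}\}$, and no concentration estimate is attempted; instead these cubes are absorbed wholesale into the Whitney decomposition $\mathcal V(k-N)$ of the $Mf$-level set, producing the partial sums that reconstruct $\int (M_{p,q}(Mf))^p w$, after which Lemma \ref{Lemma4Sawyer} is invoked as a black box (this is exactly where the second factor of $\eps^{-1}$, hence $\eps^{-2}$, comes from -- not, as you suggest, from ``an additional invocation of the $C_q$ condition when converting the level-set sum by Cavalieri'', which needs no weight condition at all).

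Your union-bound substitute, $|\Omega_k \cap 5P| \leq |\{Mf > \gamma 2^{k-1}\}\cap 5P| + c_n 2^{-(q-p)N}|P|$, does not make the tail argument ``go through with an additional $Mf$-term.'' The tail term $II$ is controlled by $2^{kp}\bigl(|\Omega_k\cap 5P|/|P|\bigr)^q \int (M\car{P})^q w$, so after expanding the $q$-th power by convexity you obtain a contribution of the form $2^{kp}\bigl(|\{Mf>\gamma 2^{k-1}\}\cap 5P|/|P|\bigr)^q \int (M\car{P})^q w$. There is no a priori smallness here: if $5P$ is fully covered by $\{Mf>\gamma 2^{k-1}\}$ this contribution is $\gtrsim 2^{kp}\int (M\car{P})^q w$, which when summed over $P$ is comparable to $2^{Np}S(k-N)$ and cannot be absorbed by the $\frac12 S(k-N)$ on the right. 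This is precisely the regime the paper excises as case (a). To repair the argument you would have to reinstate a dichotomy (is $5P$ substantially inside $\{Mf>\gamma 2^{k-1}\}$ or not?), match the case-(a) cubes against $\mathcal V(k-N)$, and route the resulting sums through the Marcinkiewicz integral and Lemma \ref{Lemma4Sawyer} -- at which point you have reproduced the paper's proof rather than shortcut it.
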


\begin{proof}
Let $\mathcal W(k)$ be the Whitney decomposition of the level set $\Omega_k = \{ x\in \R^n:  T^*f(x)>2^k\} $ for integer $k$. One can prove as in \cite{CoifmanFeffermanpaper} the following inequality:  if $Q\in \mathcal W(k-1)$ and  $5Q\not \subset \{ Mf >2^{k-N}\}$ for some $N\geq1$, then 
\begin{equation} \label{goodlambdamala} 
|\{x \in Q ; T^*f >2^k\}| \leq C_T 2^{-N}|Q|.
\end{equation}

Let $\mathcal V(k)$ be the Whitney decomposition of the set $\{ Mf >2^k\}$. We observe that for each cube $Q\in \mathcal W(k-1)$ there are two cases (for a fixed $N$ that we will chose later). 

Case $(a)$.  $5Q\subset \{ Mf>2^{k-N}\}$ in which case $5Q \subset c_n I$ for some $I\in \mathcal V(k-N)$.

Case $(b)$. $5Q\not \subset \{ Mf>2^{k-N}\}$ in which case \eqref{goodlambdamala} implies $$\sum_{\substack{ P \in \mathcal W(k)\\ P \subset 5 Q}} |P| \leq c_T 2^{-N} |Q|.$$

Now define the partial sums 
\[
S(k) = \sum_{Q\in \mathcal W(k)} 2^{kp}\int_{\R^n} (M\car Q)^q w
\]
and
\[
S(k;P) = \sum_{\substack{Q\in \mathcal W(k) \\ Q\cap P \neq \emptyset}} 2^{kp}\int_{\R^n} (M\car Q)^q w 
\leq \sum_{\substack{Q\in \mathcal W(k) \\ Q\subset 5 P} }2^{kp} \int_{\R^n} (M\car Q)^q w.
\]
Here, $P\in \mathcal W(k-1)$ and the last inequality follows from the Whitney decomposition. Thus,
\[
S(k;P) \leq \sum_{\substack{Q\in \mathcal W(k) \\ Q\subset 5 P} }2^{kp} \int_{\R^n} (M\car Q)^q w = \int_{10P }+ \int_{(10P)^c } \sum_{\substack{Q\in \mathcal W(k) \\ Q\subset 5 P} }2^{kp} (M\car Q)^q w = I + II.
\]
By \eqref{eq14-Sawyer} with $R=10$ we have 
\[
 I \leq c_{n} \frac 1{a\eps} \log \frac {c_n 10^{nq}}{\eps \eta} 2^{kp} w(5P) + \eta 2^{kp} \int_{\R^n} (M\car P)^qw,
 \]
where $\eta>0$ is a positive number at our disposal. As in the previous lemma one can show
$$II \leq c^q_n 2^{kp-Nq} \int_{\R^n} (M\car P)^qw.$$
Combining estimates for $I $ and $II $ we obtain, for every case $(b)$ cube $P \in \mathcal W(k-1)$,
\begin{equation}\label{case2}
S(k;P) \leq c_n \frac1{a\eps}\log \frac{c_n 10^{nq}}{\eps \eta} 2^{kp}  w(5P) +  (\eta + c_n^q 2^{-Nq})2^{kp}\int_{\R^n} (M\car P)^qw.
\end{equation}
Thus
\[
S(k) \leq \sum_{\substack{P\in \mathcal W(k-1) \\ P  \text{ is }  (a)}} S(k;P) + \sum_{\substack{P\in \mathcal W(k-1) \\ P  \text{ is }  (b)}} S(k;P) = III + IV.
\]
Now, since each of the $Q\in \mathcal W(k)$ of type $(a)$ intersects at most $c$ of the $P\in \mathcal W(k-1)$, (yet again due to the Whitney decomposition), we have
\[ \Rmnum{3} \leq c \sum_{I\in \mathcal V(k-N)} \sum_{\substack{Q\in \mathcal W(k)\\ Q \subset c_n I}} 2^{ kp} \int_{\R^n} (M\car Q)^qw \leq c_n^q \frac{1}{\eps} \sum_{I\in \mathcal V(k-N)} 2^{kp} \int_{\R^n} (M\car I)^qw,
\]
where we have used \eqref{eq15-Sawyer} and $M\car{c_nI} \leq c_n M\car I$ (for two different $c_n$ of course).
For the remaining part we have by \eqref{case2} 
\begin{align*}
IV &\leq c_n \frac1{a\eps}\log \frac{c_n 10^{nq}}{\eps \eta} 2^{kp}  \int_{\R^n} w(\Omega_{k-1}) +  (\eta2^p + c_n^q 2^{p-Nq})2^{(k-1)p} \sum_{P\in \mathcal W(k-1)} \int_{\R^n} (M\car P)^qw \\
& \leq c_n 2^p\frac1{a\eps}\log \frac{c_n 10^{nq}}{\eps \eta} 2^{(k-1)p} w( \Omega_{k-1}) + \frac12 S(k-1),
\end{align*}
if we choose $\eta$ small enough and $N$ big enough. This means $\eta = 2^{-(p+2)}$ and $N \geq c_n \frac{p+q}{q}$. Combining now estimates for $III$ and $IV$ we get
\begin{align*}
S(k) \leq \frac12 S(k-1) &+ \left( c_n 2^p\frac1{a\eps}\log \frac{c_n 10^{nq}2^{p+2}}{\eps} \right) 2^{(k-1)p} w( \Omega_{k-1}) \\
&+ \left(c_n^q2^{c_n \frac pq(p+q)} \frac{1}{\eps} \right) \sum_{I\in \mathcal V(k-N)} 2^{(k-N)p} \int_{\R^n} (M\car I)^qw.
\end{align*}
Set $S_M = \sum_{k\leq M } S(k) $ and sum the previous inequality over $k\leq M$ to obtain
\begin{align*}
S_M & \leq \frac12 S_M + \left( c_n 2^p\frac1{a\eps}\log \frac{c_n 10^{nq}2^{p+2}}{\eps} \right) \int_{\R^n} (T^*f)^pw \\
& \quad +\left(c_n^q2^{c_n \frac pq(p+q)} \frac{1}{\eps} \right) \int_{\R^n} (M_{p,q}(Mf))^pw  \\
& \leq \frac 12 S_M + \left( c_n 2^p\frac1{a\eps}\log \frac{c_n 10^{nq}2^{p+2}}{\eps} \right) \int_{\R^n} (T^*f)^pw \\
& \quad  + \left(c_n^q2^{c_n \frac pq(p+q)} \frac{1}{\eps} \right) \left(c_n 2^{c_n\frac{pq}{q-p} } \frac 1 {\eps} \log \frac1\eps\right)  \int_{\R^n}  (Mf)^pw,
\end{align*} 
by \eqref{eq00-Sawyer}. It can be shown (cf. \cite{Sawyer1}, p.262) that $S_M<\infty$, so taking it to the left and then taking the supremum over all $M$ we obtain the desired result.
\end{proof}

\begin{proof}[Proof of theorem \ref{norminequalitythm}]
Using the exponential decay from \cite{Buckley1993}, we know that if we write $\{T^*f>2^k\} = \cup_jQ_j$ as in the Whitney decomposition theorem, we have
\begin{equation}\label{goodlambdabuena}
|\{x\in Q_j : T^*f(x)>2\la,  Mf(x) \leq \gamma \la\}| \leq c e^{-\frac c\gamma} |Q_j|,
\end{equation}
for any $\gamma>0$. We call $E_j$ to the set in the left side of \eqref{goodlambdabuena}. Then, if we call $r$ to the exponent $1+\delta$ in Theorem \ref{RHI-Cp-Theorem}, we get
\begin{align*}
w(E_j) &= |E_j| \frac 1{|E_j|} \int_{E_j} w \leq |E_j| \left( \frac 1{|E_j|} \int_{E_j} w^r \right) ^\frac1r \leq |E_j|^{\frac1{r'}} |Q_j|^\frac 1r \left(\frac 1{|Q_j|} \int_{Q_j} w^r \right) ^\frac1r \\
& \leq  |E_j|^{\frac1{r'}} |Q_j|^\frac 1r \frac 2{|Q_j|} \int_{\R^n} (M\chi_{\strt{1.5ex}Q_j})^q w  \leq c e^{-\frac c{\gamma r'}} \int_{\R^n}  (M\chi_{\strt{1.5ex}Q_j})^q w.
\end{align*}

We use the standard good-$\la$ techniques as in \cite{Sawyer1} combined with Lemma \ref{Lemma2Sawyer} to get
\begin{align*}
\int_{\R^n} (T^*f)^p w & \leq \left( \frac2\gamma\right) ^p \int_{\R^n} (Mf)^pw + c e^{-\frac c{\gamma r'}} \int_{\R^n} (M_{p,q}T^*f)^pw \\
&  \leq \left( 2^p \gamma^{-p} + e^{-\frac c{\gamma r'}} \left(c_n^q2^{c_n\frac{p^2q}{q-p} } \frac 1 {\eps^2} \log \frac1\eps\right) \right) \int_{\R^n} (Mf)^pw \\
& \quad+ c e^{-\frac c{\gamma r'}} \left( c_n 2^p\frac1{a\eps}\log \frac{c_n 10^{nq}2^{p+2}}{\eps} \right) \int_{\R^n}( T^*f)^pw 
\end{align*}
Choosing $\gamma^{-1} \sim c_n (q+\frac{p^2q}{q-p})\frac 1\eps \log \frac 1 \eps$ we can make $$  e^{-\frac c{\gamma r'}} \left(c_n^q2^{c_n\frac{p^2q}{q-p} } \frac 1 {\eps^2} \log \frac1\eps\right)  < \frac 12$$
and 
$$ c e^{-\frac c{\gamma r'}} \left( c_n 2^p\frac1{a\eps}\log \frac{c_n 10^{nq}2^{p+2}}{\eps} \right) <\frac 12$$
and taking the term to the left side (which is possible since it is finite, see \cite{Sawyer1}) we obtain
\begin{equation*}
\int_{\R^n} (T^*f)^p w \leq c_n^p \left( c_n (q+\frac{p^2q}{q-p})\frac 1\eps \log \frac 1 \eps \right)^p \int_{\R^n} ( Mf)^pw. \qedhere
\end{equation*}
\end{proof}

\begin{remark}
We conjecture that the first $q$ in the constant should not be there. That way $\lim_{q\rightarrow \infty} c_q <\infty.$ We think this should be the case because whenever $w\in C_q$ and $q$ is bigger, we have more information. This way we could recover a weighted inequality for the $A_\infty$ class, though it would be a worse one than the one we mention in the introduction.
For this very reason, we conjecture that the dependence on the $C_q$ constant is not sharp in this sense.
\end{remark}

\end{document}